\newtheorem{theorem}{Theorem}[section]
\newtheorem{lemma}[theorem]{Lemma}
\newtheorem{corollary}[theorem]{Corollary}
\newtheorem*{MainResult}{Theorem~\ref{quarterturn}}
\theoremstyle{definition}
\newtheorem{definition}[theorem]{Definition}
\newtheorem{example}[theorem]{Example}
\theoremstyle{remark}
\begin{document}

\author{Kevin Dilks}

\title{Quarter-Turn Baxter Permutations}

\date{\today}


\begin{abstract}
Baxter permutations are known to be in bijection with a wide number of combinatorial objects. Previously, it was shown that each of these objects had a natural involution which was carried equivariantly by the known bijections, and the number of objects fixed under involution was given by Stembridge's ``q=-1'' phenomenon. In this paper, we consider the order 4 action of a quarter-turn rotation of a Baxter permutation matrix, refining the half-turn rotation previously studied. Using the method of generating trees, we show that the number of Baxter permutations fixed under quarter-turn rotation has a very nice enumeration, which suggests the existence of a combinatorial bijection.
\end{abstract}

\maketitle



\tableofcontents

\section{Background}
\label{intro}

Baxter permutations are a well-studied class of permutations, which have a number of symmetries and nice properties associated to them.

\begin{definition}
We say that a \emph{Baxter permutation} is a permutation that avoids the patterns 3-14-2 and 2-41-3, where an occurrence of the pattern 3-14-2 in a permutation $w=w_1\ldots w_n$ means there exists a quadruple of indices $\{i,j,j+1,k\}$ with $i<j<j+1<k$ and $w_j< w_k< w_i< w_{j+1}$ (and similarly for 2-14-3)\footnote{Such patterns with prescribed adjacencies are sometimes called {\textit{vincular} patterns}.}.
\end{definition}

For $n=4$, there are $B(4)= 22$ Baxter permutations in $\mathfrak{S}_4$, with the only excluded ones being 2413 and 3142. The general formula for the number of Baxter permutations of length $n$ is given by

\[ B(n):=\sum_{k=0}^{n-1}\frac{\binom{n+1}{k}\binom{n+1}{k+1}\binom{n+1}{k+2}}{\binom{n+1}{1}\binom{n+1}{2}}, \]

and was originally proven by Chung, Graham, Hoggatt, and Kleiman~\cite{NumBax}.

It is easy to see from the definition that Baxter permutations will be closed under two natural involutions. One of them reverses the order of a word ($w=w_1\ldots w_n\mapsto w_n\ldots w_1$), and the other reverses the order of the labels ($w=w_1\ldots w_n \mapsto (n+1-w_1)\ldots (n+1-w_n)$). These correspond to reflecting a permutation matrix horizontally and vertically (respectively). A slightly less obvious fact is that Baxter permutations will be closed under taking inverses, which corresponds to reflecting the permutation matrix across a diagonal line. This means that Baxter permutations are closed under the full dihedral action of the square.

It is clear that the first two involutions individually will never have any fixed points for $n>1$. 

The author has previously shown that the combination of the first two involutions (correspond to a half-turn of the permutation matrix) is carried equivariantly to a natural rotation on other combinatorial objects, and the enumeration of fixed points is an instance of the ''$q=-1$ phenomenon``. \cite{Dilks} 

Baxter permutations fixed under reflection across the diagonal correspond to self-involutive Baxter permutations, and these have previously been considered. The enumerative formula for the number of fixed-point free self-involutive Baxter permutations of length $2n$ has the surprisingly simple closed formula $b_n=\frac{3\cdot 2^{n-1}}{(n+1)(n+2)}\binom{2n}{n}$ through a bijection to planar maps \cite{fpfsi}. Later, Fusy extended this method to give a combinatorial proof of the enumeration for $b_n$, as well as a closed-form multivariate enumeration for all self-involutive Baxter permutations \cite{Fusy}.

The last remaining conjugacy class of dihedral actions on Baxter permutations is the one corresponding to $90^{\circ}$ rotation, which we now consider.

Our main result gives an enumeration of the number of Baxter permutations fixed under this quarter-turn rotation.

\begin{theorem}
\label{quarterturn}
The number of Baxter permutations of length $n$ fixed under $90^{\circ}$ rotation of its permutation matrix is $2^mC_m$ (where $C_m$ is the Catalan number) if $n=4m+1$, and zero otherwise.
\end{theorem}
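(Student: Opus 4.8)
The plan is to dispose of the congruence restrictions by a direct argument on permutation matrices, and then to enumerate the remaining case $n=4m+1$ by a generating-tree argument adapted to the fourfold symmetry.

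For the "zero otherwise" direction, observe that a $90^\circ$ rotation permutes the cells of an $n\times n$ board in orbits of size $4$, except that when $n$ is odd the central cell $\big(\tfrac{n+1}{2},\tfrac{n+1}{2}\big)$ is fixed. If a permutation matrix is fixed by the rotation, its $n$ ones form a union of such orbits, so $n\equiv 0$ or $1\pmod 4$; and when $n$ is odd we must in fact have $n\equiv 1\pmod 4$ with the central cell occupied, i.e.\ $w\big(\tfrac{n+1}{2}\big)=\tfrac{n+1}{2}$ (equivalently $w\circ w$ is the order-reversing permutation). It remains to treat $n$ even. Put $\ell=n/2$ and $a=w(\ell)$; the rotation-orbit of the cell $(\ell,a)$ is $\{(\ell,a),\,(n+1-a,\ell),\,(\ell+1,n+1-a),\,(a,\ell+1)\}$, and symmetry forces all four of these cells to carry ones. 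Reading off rows and columns gives $w(a)=\ell+1$, $w(\ell)=a$, $w(\ell+1)=n+1-a$, $w(n+1-a)=\ell$, and in particular $a\notin\{\ell,\ell+1\}$. If $a<\ell$ then the positions $a<\ell<\ell+1<n+1-a$ carry, in relative order, the values $(3,1,4,2)$, an occurrence of $3\text{-}14\text{-}2$; if instead $a>\ell+1$ then the positions $n+1-a<\ell<\ell+1<a$ carry the values $(2,4,1,3)$, an occurrence of $2\text{-}41\text{-}3$. Either way $w$ is not Baxter.

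For the enumeration when $n=4m+1$, note that a rotation-fixed permutation $w$ is determined by the ones lying in the northwest quadrant $Q=[2m]\times[2m]$ (the remaining ones are their rotation-images, together with the forced central one); there are exactly $m$ such ones, and the orbit condition forces the set of occupied rows and the set of occupied columns of $Q$ to be complementary subsets of $[2m]$. Conversely, any such "split placement" of $m$ ones in $Q$ unfolds to a genuine rotation-fixed permutation matrix, so the objects to count are the split placements whose unfolding is Baxter. We organize these into a generating tree: from a rotation-fixed Baxter permutation of length $4m+1$ we pass to one of length $4m+5$ by inserting one new one into $Q$, equivalently a symmetric quadruple of new ones into the full matrix, and selecting a canonical such quadruple to delete makes the tree well defined with each object appearing exactly once. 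In analogy with the classical succession rule $(p,q)\rightsquigarrow(1,q+1),\dots,(p,q+1),(p+1,1),\dots,(p+1,q)$ for Baxter permutations, we attach to each node a label recording the numbers of sites at which the quadruple insertion preserves both the Baxter property and the rotational symmetry, and the work is to compute how this label transforms.

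Deriving this succession rule is the crux, and we expect it to be the main obstacle: one is inserting four mutually interacting elements, so the avoidance conditions at the four sites are coupled, and one must determine which joint placements survive and how they distribute among the label classes of the children. Once the rule is established the enumeration is routine — either one recognizes it as a known "doubled Catalan" succession rule, or one translates it into a functional equation for $\sum_{m\ge 0}(\#\{\text{level }m\})\,x^m$ and solves it by the kernel method, in either case obtaining the generating function $\tfrac{1-\sqrt{1-8x}}{4x}$, whose coefficient of $x^m$ is $2^mC_m$. Together with the first part this proves the theorem.
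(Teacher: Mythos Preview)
Your treatment of the ``zero otherwise'' half is complete and in fact cleaner than the paper's. The paper rules out $n\equiv 0\pmod 4$ indirectly, via the fact (Corollary~\ref{desisinvdes}) that every Baxter permutation has equally many descents and inverse descents, which forces $n$ odd. Your direct argument---reading off the orbit of the cell $(\ell,w(\ell))$ and exhibiting an explicit $3\text{-}14\text{-}2$ or $2\text{-}41\text{-}3$---is self-contained and avoids that detour.

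The enumerative half, however, is not a proof but a plan. You set up a plausible generating tree on quarter-turn-fixed Baxter permutations and correctly identify that the entire difficulty lies in establishing the succession rule; you then stop, saying ``we expect it to be the main obstacle'' and that ``once the rule is established the enumeration is routine.'' That missing step is precisely the substance of the theorem. Concretely, you need to (i) fix a canonical quadruple to delete so that the tree is well-defined---the paper takes the $4$-cycle through the largest and smallest labels, i.e.\ removes the first entry, last entry, $1$, and $n$---and (ii) prove that for every admissible site for the new largest label, the three companion insertions can \emph{always} be carried out while preserving the Baxter property, and then compute how the relevant label transforms. Neither of these is automatic: the four insertions interact (inserting the new first entry can destroy right-to-left maxima, inserting the new last entry can destroy left-to-right maxima), and the paper spends Theorem~\ref{QuadInsertion} and Lemma~\ref{LeftIsRight} on exactly this case analysis before arriving at the rule $(i{+}1)\rightsquigarrow 2,2,3,3,\dots,i{+}2,i{+}2$. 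Until you carry out that analysis, the $2^mC_m$ count is unproven.
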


We will prove this using the method of generating trees. In Section~\ref{sec:GenTreePerm}, we will recall some background on generating trees for families of permutations. In Section~\ref{sec:GenTreeBax}, we will recall the results of Chung, Graham, Hoggatt, and Kleiman used to give the original enumeration of Baxter permutations. ~\cite{NumBax}. In Section~\ref{sec:GenTreeHalf}, we will extend these results to describe the generating tree for Baxter permutations fixed under a $180^{\circ}$ rotation. Then in Section~\ref{sec:GenTreeQuarter}, we will further extend this to describe the generating tree for Baxter permutations fixed under a $90^{\circ}$ rotation of the permutation matrix, and prove our main result.

\section{Generating trees for permutations}
\label{sec:GenTreePerm}

Say we have a family of permutations that is closed under removing the largest entry. Then every permutation in the family of length $n$ arises uniquely from taking a permutation of length $n-1$ in the family, and inserting the letter $n$ into an admissible position. 

\begin{definition}
We say that the \emph{generating tree} of a family of permutations closed under removing the largest entry is the tree whose nodes are the permutations in the family, and the parent of each node is the permutation obtained by removing the largest entry.
\end{definition}

\noindent In many cases, one can obtain enumeration results by analyzing this tree.

The family of permutations avoiding some set of classical patterns in always closed under taking any subword (in particular, removing the largest element), so we can construct a generating tree.

\begin{example}

Consider the set of permutations that avoid the classical pattern $231$. It is not hard to see given a $231$ avoiding permutation, the only place one can insert a new largest label into a permutation and still avoid the pattern $231$ is immediately to the left of a left-to-right maxima or at the end of the permutation. We say that $w_i$ is a left-to-right maxima of $w$ if $w_i>w_k$ for all $k<i$. So the number of children a permutation has in the generating tree depends only on this statistic.

Furthermore, inserting a new largest label has a predictable effect on the number of left-to-right maxima of the resulting permutation. If a permutation has $k+1$ left-to-right maxima, then it will have $k+1$ children with $2,3,\ldots, k+2$ left-to-right maxima.

Thus, an abstract tree with nodes labelled by integers that has root 1 and the property that every node $k+1$ has children labelled $2,3,\ldots k+2$ will be isomorphic to the generating tree for $231$ avoiding permutations. This tree is known as the \emph{Catalan tree}~\cite{WestGenTree}, and is known to have rank sizes corresponding to the Catalan numbers. See Figures~\ref{231fig} and \ref{cattree} for the generating tree of $231$ avoiding permutations, and the associated Catalan tree.

Similarly, we could consider permutations that avoided the classical pattern $132$. In this case, the places where we could insert a new largest label are immediately to the right of a right-to-left maxima (where we say that $w_i$ is a right-to-left maxima of $w$ if $w_i>w_k$ for all $k>i$) or at the beginning of the permutation. We again have the same predictable effect on number of right-to-left maxima by inserting a new largest label into a fixed permutation in all possible ways, and we again get a generating tree isomorphic to the Catalan tree.

\end{example}

\begin{figure}

\resizebox{\textwidth}{!}{
\begin{tikzpicture}
\tikzstyle{level 1}=[sibling distance=63mm]
\tikzstyle{level 2}=[sibling distance=27mm]
\tikzstyle{level 3}=[sibling distance=8mm]
\node  (z){1}
    child {node  (a) {21}
        child {node  (b) {321}
            child {node {\small 4321}}
			child {node {\small 3214}}
        }
        child {node  (d) {213}
            child {node {\small 4213}}
			child {node {\small 2143}}
			child {node {\small 2134}}
        }
    }
    child {node  (e) {12}
        child {node  (f) {312}
            child {node {\small 4312}}
			child {node {\small 3124}}
        }
        child {node  (g) {132}
            child {node {\small 4132}}
			child {node {\small 1432}}
			child {node {\small 1324}}
        }
        child {node  (h) {123}
            child {node {\small 4123}}
			child {node {\small 1423}}
			child {node {\small 1243}}
			child {node {\small 1234}}            
        }
    };    
	\end{tikzpicture}
	}
\caption{The beginning of the generating tree for $231$-avoiding permutations.}
\label{231fig}
\end{figure}
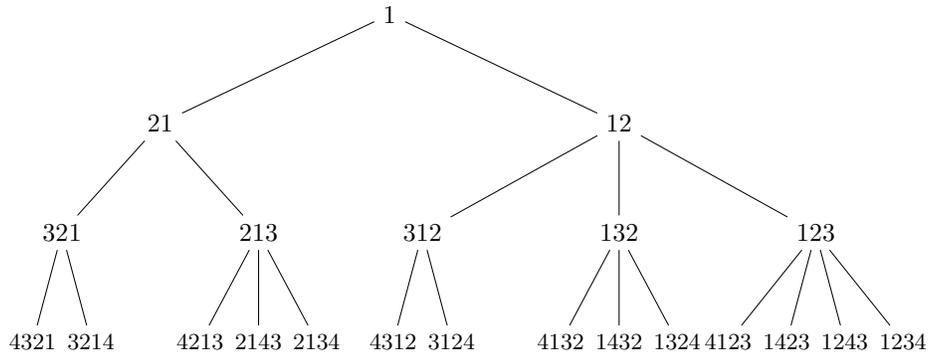

\begin{figure}
\resizebox{\textwidth}{!}{
\begin{tikzpicture}
\tikzstyle{level 1}=[sibling distance=59mm]
\tikzstyle{level 2}=[sibling distance=21mm]
\tikzstyle{level 3}=[sibling distance=5mm]
\node  (z){2}
    child {node  (a) {2}
        child {node  (b) {2}
            child {node {2}}
			child {node {3}}
        }
        child {node  (d) {3}
            child {node {2}}
			child {node {3}}
			child {node {4}}
        }
    }
    child {node  (e) {3}
        child {node  (f) {2}
            child {node {2}}
			child {node {3}}
        }
        child {node  (g) {3}
            child {node {2}}
			child {node {3}}
			child {node {4}}
        }
        child {node  (h) {4}
            child {node {2}}
			child {node {3}}
			child {node {4}}
			child {node {5}}            
        }
    };  
	\end{tikzpicture}
	}
\caption{The beginning of the Catalan tree}
\label{cattree}
\end{figure}
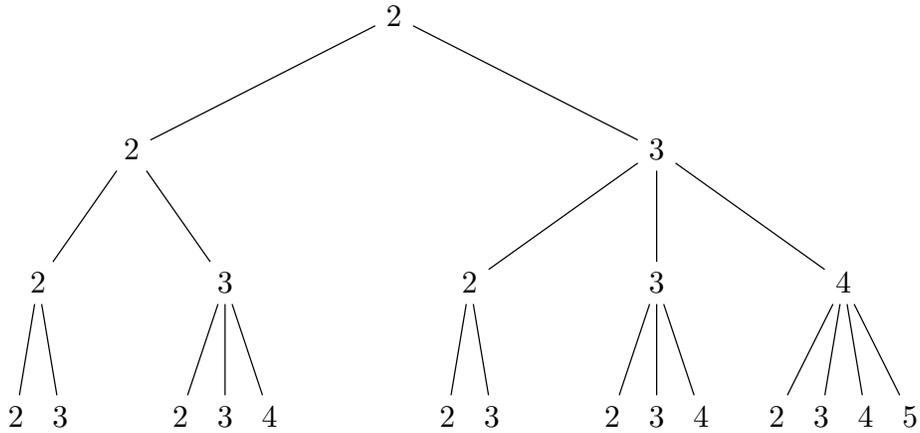

\section{Generating tree for Baxter permutations}
\label{sec:GenTreeBax}

Baxter permutations are given by a vincular pattern, where we have adjacency issues to consider, so it is not immediately obvious that they are closed under removing the largest label.

\begin{lemma}
\label{remove}
If $w$ is a Baxter permutation, and we remove its largest label, then the result is still a Baxter permutation
\end{lemma}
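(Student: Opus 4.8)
The plan is to argue by contraposition. Write $n$ for the largest entry of $w$ and let $w'$ be the word obtained by deleting it; since $w'$ then uses exactly the labels $1,\dots,n-1$, there is no relabeling to worry about, and pattern containment is in any case only about relative order. Assume $w'$ contains a forbidden pattern, either $3\text{-}14\text{-}2$ or $2\text{-}41\text{-}3$; I will produce a forbidden pattern in $w$, contradicting that $w$ is Baxter.

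The structural observation that makes this work is that in each of the two vincular patterns the adjacency-constrained middle pair consists precisely of the smallest and the largest of the four entries of the occurrence (in the order ``smallest then largest'' for $3\text{-}14\text{-}2$, and ``largest then smallest'' for $2\text{-}41\text{-}3$), while the remaining two entries carry no adjacency constraint. So first I would fix an occurrence of a forbidden pattern in $w'$ and lift its four entries to $w$. None of them equals $n$, so deleting $n$ did not change their relative order, and the two unconstrained entries still appear in the correct left-to-right position relative to the others. The only thing that can fail is that the adjacent middle pair of $w'$ is no longer adjacent in $w$; since only the single entry $n$ was removed, this occurs exactly when $n$ sits in $w$ strictly between the smallest and the largest entry of the occurrence, and in every other case the same four entries already form the same forbidden pattern in $w$.

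In the remaining case I would replace the largest entry of the occurrence by $n$ itself. The smallest entry is still adjacent to this new top entry (it is now adjacent to $n$), the two unconstrained entries are untouched, and the chain of value inequalities ``smallest $<$ second $<$ third $<$ largest'' becomes ``smallest $<$ second $<$ third $< n$'', which holds automatically since the first three inequalities held in $w'$ and $n$ is the maximum. A quick check confirms that the left-to-right order of the four chosen entries (with $n$ in the position it occupies in $w$) is exactly the one required by the same pattern. Hence $w$ contains a forbidden pattern of the same type as the one in $w'$, the desired contradiction.

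The only step requiring genuine care is the last one: recognizing that the adjacent pair in a Baxter-forbidden vincular pattern is always the extreme pair, which is precisely what legitimizes substituting $n$ for the largest entry; keeping straight that the smallest entry lies on the correct side of $n$ in the two cases is a small piece of bookkeeping. Everything else — that deleting the maximum preserves relative order, and that removing one entry can merge at most one gap between consecutive positions — is immediate.
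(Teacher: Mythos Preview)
Your proof is correct and is essentially the same argument as the paper's: assume the reduced word contains a forbidden vincular pattern, observe that the only obstruction to lifting it is that $n$ was the entry separating the adjacent middle pair, and in that case swap $n$ in for the ``4'' of the occurrence. The paper treats only the $2\text{-}41\text{-}3$ case explicitly and appeals to symmetry, whereas you spell out the structural reason (the adjacent pair is always the extreme pair) that makes the substitution work uniformly for both patterns.
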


\begin{proof}
Say $w=w_1\ldots w_n$ is a Baxter permutation, and we remove $w_i=n$ to get $\bar{w}=w_1\ldots\hat{w_i}\ldots w_n$. If $\bar{w}$ is not a Baxter permutation, then WLOG say there is an instance of $2-41-3$. That means there is a subsequence $1\leq i_1<i_2<i_3<i_4\leq n$ with $i_1,i_2,i_3,i_4\neq i$, $w_{i_3}<w_{i_1}<w_{i_4}<w_{i_2}$, and $i_2$ is adjacent to $i_3$ in $\hat{w}$. The only way $i_2$ can be adjacent to $i_3$ in $\hat{w}$ is if $i_2+1=i_3$, or if $i_3+2=i+1=i_2$. In the first case, the subsequence $i_1,i_2,i_3,i_4$ would be an instance of $2-41-3$ in $w$, a contradiction of our assumption. In the second case, the subsequence $i_1,i,i_3,i_4$ would be an instance of $2-41-3$ in $w$, again a contradiction.
\end{proof}

Therefore, every Baxter permutation of length $n$ uniquely arises from taking a Baxter permutation of length $n-1$ and inserting $n$ into an admissible position. 

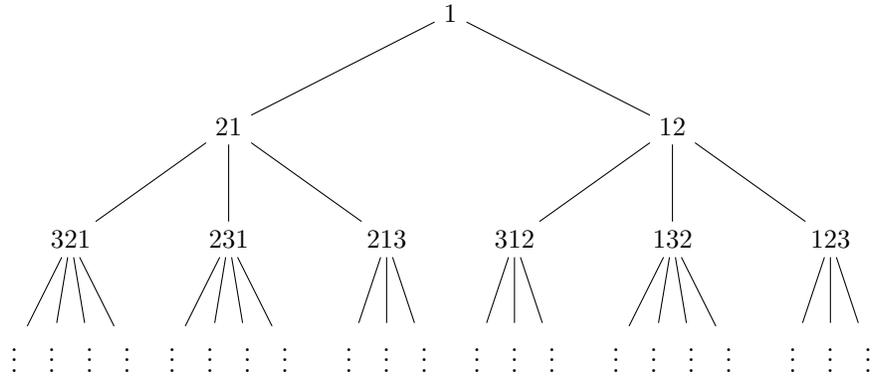
\begin{figure}
\begin{tikzpicture}
\tikzstyle{level 1}=[sibling distance=59mm]
\tikzstyle{level 2}=[sibling distance=21mm]
\tikzstyle{level 3}=[sibling distance=5mm]
\node  (z){1}
    child {node  (a) {21}
        child {node  (b) {321}
            child {node {$\vdots$}}
			child {node {$\vdots$}}
			child {node {$\vdots$}}
			child {node {$\vdots$}}
        }
        child {node  (c) {231}
            child {node {$\vdots$}}
			child {node {$\vdots$}}
			child {node {$\vdots$}}
			child {node {$\vdots$}}
        }
        child {node  (d) {213}
            child {node {$\vdots$}}
			child {node {$\vdots$}}
			child {node {$\vdots$}}
        }
    }
    child {node  (e) {12}
        child {node  (f) {312}
            child {node {$\vdots$}}
			child {node {$\vdots$}}
			child {node {$\vdots$}}
        }
        child {node  (g) {132}
            child {node {$\vdots$}}
			child {node {$\vdots$}}
			child {node {$\vdots$}}
			child {node {$\vdots$}}
        }
        child {node  (h) {123}
            child {node {$\vdots$}}
			child {node {$\vdots$}}
			child {node {$\vdots$}}
        }
    };
    
	\end{tikzpicture}

\caption{The beginning of the generating tree for Baxter permutations}
\end{figure}

Chung, Graham, Hoggatt, and Kleiman \cite{NumBax} studied this generating tree to come up with their enumerative result. They showed that the admissible places where we can insert a new largest label into a Baxter permutation are immediately to the left of a left-to-right maxima, and immediately to the right of a left-to-right maxima.

The resulting Baxter permutation will also have a predictable number of left-to-right and right-to-left maxima. Say $w$ has left-to-right maxima $x_1<x_2,\ldots<x_i=n$ and right-to-left maxima $n=y_j>y_{j-1}>\ldots>y_1$. If we insert $n+1$ to the left of $x_k$, the resulting permutation will have left-to-right maxima $x_1<\ldots <x_{k-1}<n+1$, and right-to-left maxima $n+1>n=y_j>\ldots>y_1$. If we insert $n+1$ to the right of $y_k$, the resulting permutation will have left-to-right maxima $x_1<x_2<\ldots x_i=n<n+1$, and right-to-left maxima $n+1>y_{k-1}>\ldots >y_1$.

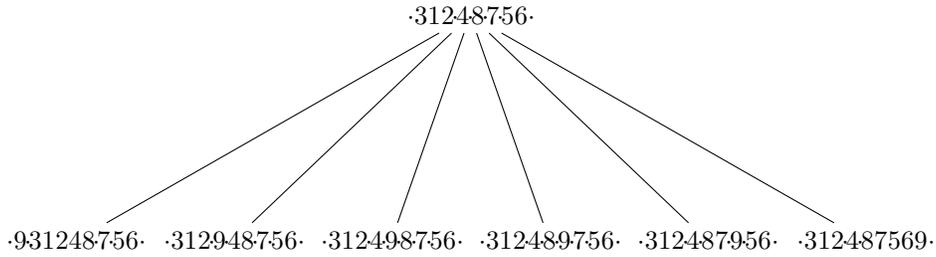
\begin{figure}
\begin{tikzpicture}
\tikzstyle{level 1}=[sibling distance=21mm]
\node (A) {$ \cdot  312 \!\! \cdot \!\! 4 \!\! \cdot \!\! 8 \!\! \cdot \!\! 7 \!\! \cdot \!\! 56  \cdot  $}
 [level distance=30mm]
 child {node {$ \cdot  9 \!\! \cdot \!\! 31248 \!\! \cdot \!\! 7 \!\! \cdot \!\! 56 \cdot $}}
 child {node {$  \cdot  312 \!\! \cdot \!\! 9\!\! \cdot \!\!48\!\! \cdot \!\!7\!\! \cdot \!\!56  \cdot  $}}
 child {node {$  \cdot  312 \!\! \cdot \!\! 4 \!\! \cdot \!\! 9\!\! \cdot \!\!8\!\! \cdot \!\!7\!\! \cdot \!\!56  \cdot  $}}
 child {node {$  \cdot  312 \!\! \cdot \!\! 4 \!\! \cdot \!\! 8 \!\! \cdot \!\! 9 \!\! \cdot \!\! 7 \!\! \cdot \!\! 56 \cdot  $}}
 child {node {$ \cdot 312 \!\! \cdot \!\! 4 \!\! \cdot \!\! 87 \!\! \cdot \!\! 9 \!\! \cdot \!\! 56 \cdot  $}}
 child {node {$  \cdot  312 \!\! \cdot \!\! 4 \!\! \cdot \!\! 87569 \cdot $}}
;
\end{tikzpicture}
\caption{Branching of generating tree for Baxter permutations at $w=31248756$, with insertion points marked.}
\end{figure}

This means that the number of children a given Baxter permutation has (and how many children those children will have, and so on) is entirely encoded by the number $i$ of left-to-right maxima, and the number $j$ of right-to-left maxima. Thus, the tree with root $(1,1)$, and the property that every node $(i,j)$ has children $(1,j+1), (2,j+1),\ldots (i,j+1),(i+1,j), (i+1,j-1), \ldots (i+1,1)$ will be isomorphic to the generating tree for Baxter permutations.

\begin{figure}
\begin{tikzpicture}
\tikzstyle{level 1}=[sibling distance=16mm]
\node (A) {$(i,j)$}
 [level distance=30mm]
 child {node {$(1,j+1)$}}
 child {node {$(2,j+1)$}}
 child {node {$\ldots$}}
 child {node {$(i,j+1)$}}
 child {node {$(i+1,j)$}}
 child {node {$(i+1,j-1)$}}
 child {node {$\ldots$}}
 child {node {$(i+1,1)$}}
;
\end{tikzpicture}
\caption{Rule for generating tree isomorphic to Baxter permutations}
\end{figure}
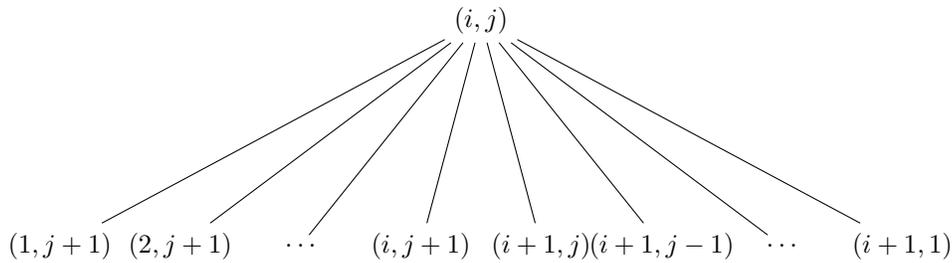

\begin{figure}
\begin{tikzpicture}
\tikzstyle{level 1}=[sibling distance=59mm]
\tikzstyle{level 2}=[sibling distance=21mm]
\tikzstyle{level 3}=[sibling distance=5mm]
\node  (z){$(1,1)$}
    child {node  (a) {$(1,2)$}
        child {node  (b) {$(1,3)$}
            child {node {$\vdots$}}
			child {node {$\vdots$}}
			child {node {$\vdots$}}
			child {node {$\vdots$}}
        }
        child {node  (c) {$(2,2)$}
            child {node {$\vdots$}}
			child {node {$\vdots$}}
			child {node {$\vdots$}}
			child {node {$\vdots$}}
        }
        child {node  (d) {$(2,1)$}
            child {node {$\vdots$}}
			child {node {$\vdots$}}
			child {node {$\vdots$}}
        }
    }
    child {node  (e) {$(2,1)$}
        child {node  (f) {$(1,2)$}
            child {node {$\vdots$}}
			child {node {$\vdots$}}
			child {node {$\vdots$}}
        }
        child {node  (g) {$(2,2)$}
            child {node {$\vdots$}}
			child {node {$\vdots$}}
			child {node {$\vdots$}}
			child {node {$\vdots$}}
        }
        child {node  (h) {$(3,1)$}
            child {node {$\vdots$}}
			child {node {$\vdots$}}
			child {node {$\vdots$}}
        }
    };
    
	\end{tikzpicture}

\caption{The beginning of the generating tree isomorphic to the Baxter permutation generating tree.}
\end{figure}
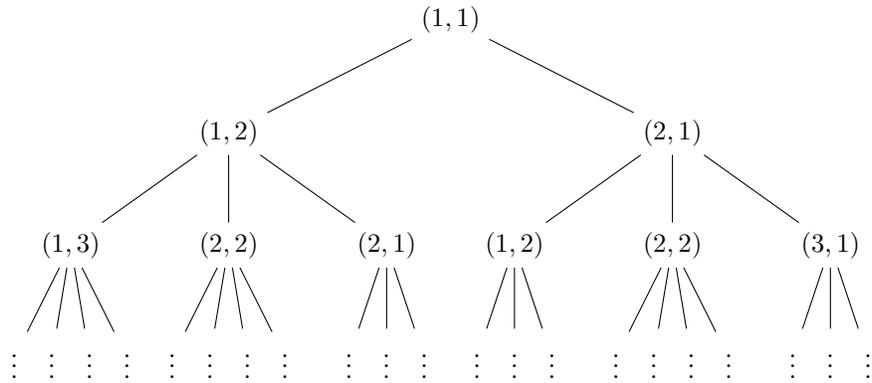

One interesting consequence of the generating tree approach gives a non-obvious relationship between two permutation statistics on Baxter permutations.

\begin{corollary}
\label{desisinvdes}
Baxter permutations have the same number of descents as inverse descents.

\end{corollary}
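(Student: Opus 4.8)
The plan is to carry the two statistics $\operatorname{des}(w)$ (number of descents) and $\operatorname{ides}(w):=\operatorname{des}(w^{-1})$ (number of inverse descents) down the generating tree of Lemma~\ref{remove} and show that along every edge they obey \emph{the same} local rule; since they agree at the root, this yields the pointwise identity $\operatorname{des}(w)=\operatorname{ides}(w)$ for every Baxter permutation $w$, which is stronger than (and immediately implies) equidistribution of the two statistics over Baxter permutations of a fixed length. Recall that every edge of the tree is one of two kinds: a \emph{left insertion} places the new largest letter $n+1$ immediately to the left of a left-to-right maximum of $w$, and a \emph{right insertion} places it immediately to the right of a right-to-left maximum; no slot is of both kinds, since the letter just left of a left-to-right maximum (resp.\ just right of a right-to-left maximum) is strictly smaller than it.

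First I would record the effect on $\operatorname{des}$. Inserting a letter changes which entries are adjacent only at the insertion slot, so the descent set changes only there. For a left insertion next to the left-to-right maximum $x$, the entry before the slot (if any) is $<x<n+1$, so no descent is destroyed while the new adjacency $(n+1,x)$ is a descent: $\operatorname{des}$ increases by $1$. For a right insertion next to the right-to-left maximum $y$, the entry after the slot (if any) is $<y<n+1$, so the descent at $(y,\cdot)$ is destroyed and the descent at $(n+1,\cdot)$ is created, a net change of $0$ (and if $y$ is the last entry, nothing changes). Hence $\operatorname{des}$ goes up by exactly $1$ on each left insertion and is unchanged on each right insertion.

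Next I would do the same for $\operatorname{ides}$, writing $\operatorname{ides}(w)$ as the number of $k\le n-1$ such that $k$ occurs to the right of $k+1$ in $w$. Inserting the new maximum $n+1$ does not disturb the relative order of any pair $\{k,k+1\}$ with $k\le n-1$, so the only inverse descent that can be created is the one from the pair $\{n,n+1\}$, and it is created exactly when $n+1$ is inserted weakly to the left of the current position $t$ of $n$. The point that needs a little care is the position bookkeeping: the left-to-right maxima of $w$ all sit at positions $\le t$ (since $n$ is the last left-to-right maximum), so every left insertion happens at a position $\le t$ and does create this inverse descent; the right-to-left maxima of $w$ all sit at positions $\ge t$ (since $n$ is the first right-to-left maximum, the smaller right-to-left maxima occurring further right), so every right insertion happens at a position $\ge t+1$ and does not create it. Thus $\operatorname{ides}$ as well goes up by $1$ on each left insertion and is unchanged on each right insertion.

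Finally, the root of the generating tree is the one-letter permutation, where $\operatorname{des}=\operatorname{ides}=0$, and a node of type $(i,j)$ has its first $i$ children arising from left insertions and its last $j$ children from right insertions; so by induction on depth, $\operatorname{des}(w)$ and $\operatorname{ides}(w)$ each equal the number of left-insertion steps on the path from the root to $w$, and in particular are equal. Equidistribution of $\operatorname{des}$ and $\operatorname{ides}$ over the Baxter permutations in $\mathfrak{S}_n$ follows immediately. The only genuinely nontrivial ingredient is the position bookkeeping in the $\operatorname{ides}$ step; the rest reuses the analysis already carried out to set up the tree.
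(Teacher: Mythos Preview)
Your proof is correct and follows essentially the same approach as the paper: both arguments track $\operatorname{des}$ and $\operatorname{ides}$ along the generating tree, observe that a left insertion increments each by exactly one while a right insertion leaves each unchanged, and conclude by induction from the root. Your write-up is somewhat more explicit about the position bookkeeping for the inverse-descent step, but the underlying idea is identical.
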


\begin{proof}
If $n$ is inserted to the left of a left-to-right maximum, then either $n+1$ is being added to the front of the word, or it is being inserted into an ascent. In either case, the resulting permutation will have one more descent than the original one. But since $n$ is always the rightmost left-to-right maximum, $n+1$ is being inserted to the left of $n$, so we are also creating one new inverse descent. Similarly, $n+1$ being inserted to the right of a right-to-left maximum creates no new descents nor inverse descents. Since the act of inserting a new largest label preserves the difference between number of descents and number of inverse descents, and the base of our generating tree has the same number of descents as inverse descents, all permutations in the generating tree have the same number of descents as inverse descents.
\end{proof}

\section{Generating tree for Baxter permutations fixed under $180^{\circ}$ rotation}
\label{sec:GenTreeHalf}

Now, let us consider the generating tree of all Baxter permutations fixed under $180^{\circ}$ rotation. 

A permutation $w$ of length $n$ being fixed under $180^{\circ}$ rotation means that if $w_i=j$, then $w_{n+1-i}=n+1-j$. By the same logic of Lemma~\ref{remove}, we can see that we can remove 1 from a Baxter permutation (and decrease all remaining labels by one) and still be a Baxter permutation. 

Combining these two things, we can see that if we remove $n$ and 1 (and then decrease all the labels by 1) from a Baxter permutation fixed under $180^{\circ}$ rotation, then we will still have a Baxter permutation fixed under $180^{\circ}$ rotation. So again, we can construct a generating tree. 

Note that in this case, we are removing two entries at a time, so we will have separate generating trees for when $n$ is even and when $n$ is odd. We will use the convention that the generating tree for $n$ even has the empty permutation $\emptyset$ of length 0 as its root, with children $12$ and $21$.

\begin{figure}
\begin{tikzpicture}
\tikzstyle{level 1}=[sibling distance=59mm]
\tikzstyle{level 2}=[sibling distance=12mm]
\tikzstyle{level 3}=[sibling distance=5mm]
\node  (z){1}
    child {node  (a) {321}
        child {node  (b) {54321}}
        child {node (c) {45312}}
        child {node (d) {41352}}
        child {node (e) {14325}}
    }
    child {node  (f) {123}
        child {node  (g) {52341}}
        child {node  (h) {25314}}
        child {node  (i) {21354}}
        child {node  (j) {12345}}
    };

\end{tikzpicture}
\caption{Generating tree for Baxter permutations of odd length fixed under conjugation by the longest element}
\end{figure}
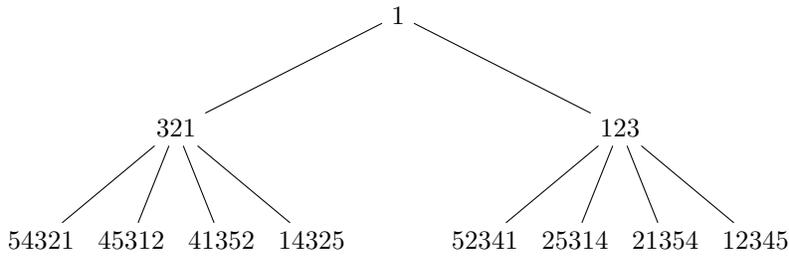

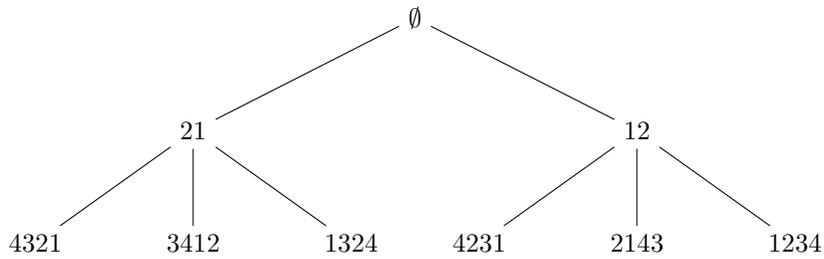
\begin{figure}
\begin{tikzpicture}
\tikzstyle{level 1}=[sibling distance=59mm]
\tikzstyle{level 2}=[sibling distance=21mm]
\tikzstyle{level 3}=[sibling distance=5mm]
\node  (z){$\emptyset$}
    child {node  (a) {21}
        child {node  (b) {4321}}
        child {node (c) {3412}}
        child {node (d) {1324}}
    }
    child {node  (f) {12}
        child {node  (g) {4231}}
        child {node  (h) {2143}}
        child {node  (i) {1234}}
    };
\end{tikzpicture}
\caption{Generating tree for Baxter permutations of even length fixed under $180^{\circ}$ rotation of permutation matrix.}
\end{figure}

We already have a combinatorial rule for when we can insert a new largest entry into a Baxter permutation and still be a Baxter permutation, so now we come up with a combinatorial rule for when we can insert a new smallest entry into a Baxter permutation and still be a Baxter permutation. By inserting a new smallest entry, we mean that we increase all the labels in the existing permutation by 1, and then insert a new entry with label 1, so that if the original permutation was a standard permutation of $n$ letters on $[n]$, then the result will be a standard permutation on $[n+1]$.

\begin{lemma}
\label{smallinsertion}
Inserting a new smallest label into position $j$ into a permutation is equivalent to rotating the permutation matrix $180^{\circ}$, inserting $n$ into position $n+1-j$, and then rotating the permutation matrix $180^{\circ}$ again.

Consequently, given a Baxter permutation $w$, the admissible places we can insert a new smallest label are immediately to the left of a left-to-right minimum, or immediately to the right of a right-to-left minimum.
\end{lemma}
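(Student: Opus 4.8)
The plan is to establish the displayed ``conjugation'' identity by a direct bookkeeping argument, and then obtain the description of the admissible positions by transporting the Chung--Graham--Hoggatt--Kleiman insertion rule of Section~\ref{sec:GenTreeBax} through that identity.

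First I would fix notation for the rotation. For a permutation $u = u_1 \ldots u_N$ let $\rho(u)$ denote the word obtained by reversing $u$ and complementing its values, so that $\rho(u)_k = N+1 - u_{N+1-k}$; this is precisely the $180^{\circ}$ rotation of the permutation matrix, and $\rho$ is an involution. Now let $w = w_1 \ldots w_{n-1}$ be a permutation and let $v$ be the result of inserting a new smallest label at position $j$, so that $v$ has the entry $1$ in position $j$ and, in the remaining positions read in order, the entries of $w$ each increased by $1$. Applying $\rho$ complements all values of $v$ and reverses their order: the entry $1$ becomes the entry $n$ and moves from position $j$ to position $n+1-j$, while the remaining entries become exactly the entries of $\rho(w)$ in order. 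Hence $\rho(v)$ is obtained from $\rho(w)$ by inserting the new largest label $n$ at position $n+1-j$, which is the claimed identity. The only thing that needs care here is the position arithmetic, including the boundary cases $j=1$ and $j=n$, where the insertion is at the front or the back of the word.

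For the second statement, recall that Baxter permutations are closed under $\rho$, and that, by the analysis in Section~\ref{sec:GenTreeBax}, the admissible positions for inserting a new largest label into a Baxter permutation are exactly those immediately to the left of a left-to-right maximum or immediately to the right of a right-to-left maximum. Combining this with the identity just proved, for Baxter $w$ the permutation $v$ obtained by inserting a new smallest label at position $j$ is Baxter if and only if $\rho(v)$ is Baxter, i.e.\ if and only if position $n+1-j$ is an admissible slot for a new largest label in $\rho(w)$.

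It then remains to translate ``admissible slot in $\rho(w)$'' back into a condition on $w$. Here I would use the two elementary equivalences that come from $\rho(w)_k = n - w_{n-k}$: the entry $\rho(w)_p$ is a left-to-right maximum of $\rho(w)$ if and only if $w_{n-p}$ is a right-to-left minimum of $w$, and $\rho(w)_p$ is a right-to-left maximum of $\rho(w)$ if and only if $w_{n-p}$ is a left-to-right minimum of $w$. Feeding these through the position reversal $j \leftrightarrow n+1-j$ (and checking that ``immediately to the left of position $p$'' in $\rho(w)$ corresponds to ``immediately to the right of position $n-p$'' in $w$, and conversely), the slot ``immediately left of a left-to-right maximum of $\rho(w)$'' becomes ``immediately right of a right-to-left minimum of $w$'', and ``immediately right of a right-to-left maximum of $\rho(w)$'' becomes ``immediately left of a left-to-right minimum of $w$'', which is exactly the asserted list; the boundary cases ($j=1$ at the front, $j=n$ at the back) serve as a sanity check rather than a difficulty. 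The only real obstacle in the whole argument is keeping the off-by-one shifts straight — insertion pushes later positions over by one, while reversal interchanges left and right — so I would be most careful precisely where those two effects interact.
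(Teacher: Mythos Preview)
Your argument is correct and follows the same symmetry-transport idea as the paper: conjugate by a dihedral symmetry of the permutation matrix that carries Baxter to Baxter, then pull the Chung--Graham--Hoggatt--Kleiman rule for inserting a largest label back through that symmetry.

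The one difference worth noting is which symmetry is used. You work literally with the $180^{\circ}$ rotation $\rho$ stated in the lemma, which forces you to track the position change $j \leftrightarrow n+1-j$ and the left/right swap when translating slots back to $w$. The paper's own proof instead uses only the label complement $i \mapsto n+1-i$ (a vertical reflection, not the full half-turn): this fixes positions, so inserting a smallest label at position $j$ corresponds to inserting a largest label at the \emph{same} position $j$ in the complemented word, and left-to-right/right-to-left maxima become left-to-right/right-to-left minima directly with no reversal bookkeeping. That route is shorter and avoids exactly the off-by-one accounting you flagged as the delicate part, at the cost of not literally establishing the first sentence of the lemma as written; your version does establish it, and the extra care you describe is genuinely what is needed.
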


\begin{proof}
We use the fact that $w$ is a Baxter permutation if and only if it is a Baxter permutation when we reverse the labels (i.e., send $i$ to $n+1-i$). Then if we take $w$, reverse the labels, insert a new largest label into position $i$, and then reverse the labels again, this is equivalent to inserting 1 into position $i$. Thus, we can insert 1 into position $i$ if and only if we can insert $n+1$ into position $i$ in the reversed word. This happens if and only if position $i$ is immediately to the left of a left-to-right maximum, or immediately to the right of a right-to-left maximum in the reversed word. This happens if and only if position $i$ is immediately to the left of a left-to-right minimum, or immediately to the right of a right-to-left minimum in the original word.
\end{proof}

Now, we want to make the generating tree using this insertion rule.

\begin{theorem}
The generating tree for Baxter Permutations fixed under $180^{\circ}$ rotation of even (resp. odd) length is isomorphic to the tree with root $(0,0)$ (resp. $(1,1)$), and branching rule given by $(i,j)$ having children $(1,j+2), (2, j+1), \ldots (i,j+1), (i+1,j),\ldots (i+1,2), (i+2,1)$.

\end{theorem}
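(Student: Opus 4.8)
The plan is to attach to each $180^{\circ}$-symmetric Baxter permutation $w$ of length $n$ the pair $(i,j)$, where $i$ counts the left-to-right maxima of $w$ and $j$ counts the right-to-left maxima. Since $180^{\circ}$ rotation sends left-to-right maxima to right-to-left minima and right-to-left maxima to left-to-right minima, for such a $w$ we automatically also have $i$ equal to the number of right-to-left minima and $j$ equal to the number of left-to-right minima. First I would dispatch the base cases: the root $1$ of the odd tree has $(i,j)=(1,1)$, and the root $\emptyset$ of the even tree has $(i,j)=(0,0)$ with its two children $12,21$ carrying the pairs $(2,1),(1,2)$; the empty permutation must be handled by hand, since the insertion rule of Section~\ref{sec:GenTreeBax} tacitly assumes positive length.

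Next I would analyze the children of a fixed $w$ of length $n\ge 1$ with pair $(i,j)$. A child is a $180^{\circ}$-symmetric Baxter permutation $w'$ of length $n+2$ obtained from $w$ by inserting a new largest label and then a new smallest label so that the outcome is again $180^{\circ}$-symmetric. By Chung--Graham--Hoggatt--Kleiman (Section~\ref{sec:GenTreeBax}), the legal spots for the largest insertion are the $i$ spots immediately left of a left-to-right maximum $x_1<\dots<x_i$ and the $j$ spots immediately right of a right-to-left maximum $n=y_j>\dots>y_1$; here $x_1=w_1$ and $y_1=w_n$, so the spot left of $x_1$ is the very front and the spot right of $y_1$ is the very back, while all remaining legal spots are interior. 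Having placed the largest, $180^{\circ}$-symmetry forces the smallest to be placed at the mirror-image position. The crux of the argument is to show that this forced position is always a legal smallest-insertion spot; I would prove this by a position-bookkeeping computation using Lemma~\ref{smallinsertion} and the symmetry of $w$ (the mirror of a legal largest-spot of a symmetric $w$ is a legal smallest-spot of $w$), paying attention to the two cases where the largest was put at the front or at the back. This shows $w$ has exactly $i+j$ children, one per legal largest-insertion spot.

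Finally I would read off the pair of each child. From Section~\ref{sec:GenTreeBax}, inserting the largest left of $x_k$ gives a permutation with $k$ left-to-right maxima and $j+1$ right-to-left maxima, and inserting it right of $y_k$ gives one with $i+1$ left-to-right maxima and $k$ right-to-left maxima; and inserting the new smallest label afterwards raises the left-to-right maxima count by one exactly when it lands at the very front, raises the right-to-left maxima count by one exactly when it lands at the very back, and leaves both counts unchanged otherwise. By the symmetry, the smallest lands at the front precisely when the largest was put at the back (the spot right of $y_1$) and at the back precisely when the largest was put at the front (the spot left of $x_1$). Tabulating: the spot left of $x_1$ gives $(1,j+2)$; the spot left of $x_k$ for $2\le k\le i$ gives $(k,j+1)$; the spot right of $y_k$ for $2\le k\le j$ gives $(i+1,k)$; and the spot right of $y_1$ gives $(i+2,1)$. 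Listing these in order recovers the branching rule $(1,j+2),(2,j+1),\dots,(i,j+1),(i+1,j),\dots,(i+1,2),(i+2,1)$. Since the resulting multiset of child pairs depends only on $(i,j)$, the assignment $w\mapsto(i,j)$ is an isomorphism from the generating tree onto the abstract tree with the stated root and rule. I expect the step requiring the most care to be the verification that the mirror-forced smallest insertion is always legal, together with the correct treatment of the two boundary sub-cases that account for the ``$+2$'' appearing in the extreme children $(1,j+2)$ and $(i+2,1)$.
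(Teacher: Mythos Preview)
Your approach mirrors the paper's: label each symmetric Baxter permutation by its pair $(i,j)$ of left-to-right and right-to-left maxima, enumerate the $i+j$ legal largest-insertion spots, pair each with the symmetry-forced smallest-insertion, and read off the child pairs. Your tabulation of the resulting pairs is correct, and your treatment of the front/back boundary spots (accounting for the ``$+2$'' in $(1,j+2)$ and $(i+2,1)$) is fine.

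There is, however, one boundary case you do not mention, and it is precisely the one the paper singles out. When $w$ has even length $n$ and the chosen largest-insertion spot is the exact middle (between positions $n/2$ and $n/2+1$), the word obtained after inserting $n+1$ has odd length with $n+1$ sitting at its center; then \emph{both} positions immediately left and immediately right of $n+1$ yield a $180^{\circ}$-symmetric permutation of length $n+2$, so symmetry alone does not force the smallest's position. For instance, from $w=21$ the middle insertion gives $231$; after shifting labels up, both $3142$ and $3412$ are $180^{\circ}$-symmetric, but only $3412$ is Baxter. The paper closes this gap by checking that exactly one of the two symmetric choices avoids creating a forbidden $2\text{-}41\text{-}3$ or $3\text{-}14\text{-}2$ pattern at the newly adjacent pair $(n{+}1),1$, so the middle spot still contributes exactly one child with the expected pair. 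Without that argument, your claim that ``$180^{\circ}$-symmetry forces the smallest to be placed at the mirror-image position'' is false as stated, and the bijection between legal largest-spots and children is not yet established for the even-length tree.
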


\begin{proof}

The proof of Lemma~\ref{smallinsertion} makes it clear that if we can insert $n$ into position $i$ of $w$, then we can insert $1$ into position $n+1-i$ of $w_0ww_0$. So if $w$ is fixed under conjugation by the longest element, then we can insert $n+1$ into a position if and only if we can insert $1$ into the complementary position. However, we need to check to make sure that we can still insert $1$ into a complementary position after we have inserted $n+1$.

If $i$ is less than $n/2$, then the complementary place we want to insert 1 will be shifted right by 1. If $i$ is greater than $n/2$, then the complementary place we want to insert 1 will still be $n+1-i$. In either of these cases, the act of inserting $n$ will not affect being able to insert 1 into the complementary position, because the combinatorial rule for inserting 1 depends on things being left-to-right and right-to-left minima, and inserting a new largest label will not affect that.

As a kind of boundary case, we have the situation where $n$ is even and $i=n/2$, so we are inserting $n+1$ into the middle of the word. Then we could insert 1 either immediately to the left or right of $n+1$ and still have a permutation fixed under conjugation by the longest element. However, exactly one of these choices will correspond to a Baxter permutation. 

Without loss of generality, say we inserted $n+1$ to the right of a left-to-right maxima, $w_{n/2}$. Then $w_{n/2+1}$ will be a right-to-left minima, even after we insert $n+1$. So we can insert 1 to the left of $w_{n/2+1}$, which will be immediately to the right of $n+1$. This means that $w_{n/2}>w_{n/2+1}$, or else the subword $w_{n/2}(n+1)1w_{n/2+1}$ would be a copy of the vincular pattern $2-41-3$. Thus, if we inserted 1 on the other side of $n+1$, we would be making a subword that was an instance of the forbidden vincular pattern $3-14-2$.

Now, we want to make an isomorphic generating tree that doesn't require us to keep track of the permutation in full, analogous to what we did with all Baxter permutations.

Again, we only need to keep track of the number of left-to-right and right-to-left maxima. Each of these corresponds to a place where we can insert $n+1$, and then we know there will be a complementary place we can insert 1 to stay fixed under conjugation by the longest element. We know how inserting $n+1$ will affect the number of left-to-right and right-to-left maxima. Inserting 1 will in general not create any new left-to-right or right-to-left maxima, except in the case where we are adding 1 to the beginning or end of the word. Thus, we get the desired branching rule (see Figure~\ref{fig:conjbranch}).
\end{proof}

\begin{figure}
\begin{tikzpicture}
\tikzstyle{level 1}=[sibling distance=15mm]
\node (A) {$(i,j)$}
 [level distance=30mm]
 child {node {$(1,j\!+\!2)$}}
 child {node {$(2,j\!+\!1)$}}
 child {node {$\ldots$}}
 child {node {$(i,j\!+\!1)$}}
 child {node {$(i\!+\!1,j)$}}
 child {node {$(i\!+\!1,j\!-\!1)$}}
 child {node {$\ldots$}}
 child {node {$(i\!+\!2,1)$}}
;
\end{tikzpicture}
\caption{Rule for generating tree isomorphic to Baxter permutations fixed under $180^{\circ}$ rotation of permutation matrix.}
\label{fig:conjbranch}
\end{figure}
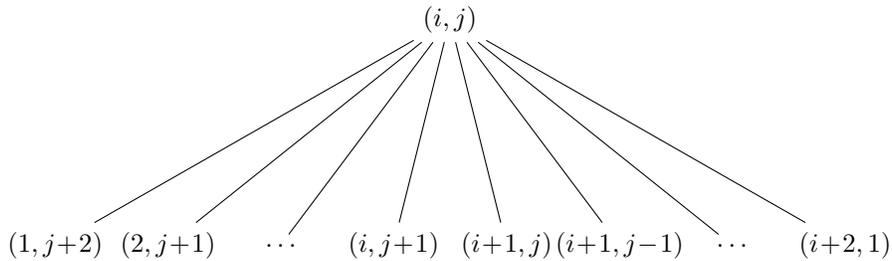

In principle, one could try and analyze this branching rule to come up with an algebraic formula for the number of Baxter permutations fixed under conjugation by the longest element. While it may give a refined enumeration for the number of Baxter permutations fixed under conjugation by the longest element with a given number of left-to-right and right-to-left maxima, it is unlikely that the resulting expression for the entire set would be as elegant as the $``q=-1''$ formula in \cite{Dilks}. In practice, this is more of a stepping stone to the case of Baxter permutations fixed under $90^{\circ}$ rotation, where the rules for insertion are more technical, but the resulting branching structure has a transparent enumerative formula.

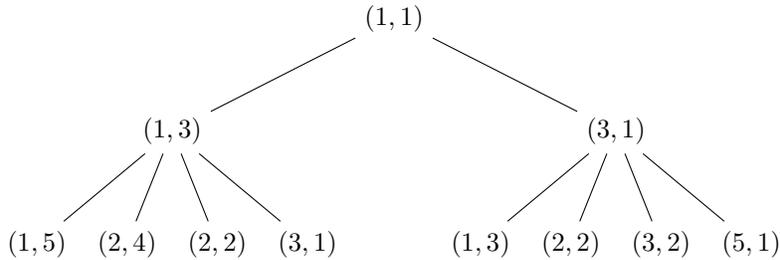
\begin{figure}
\begin{center}
\begin{tikzpicture}
\tikzstyle{level 1}=[sibling distance=59mm]
\tikzstyle{level 2}=[sibling distance=12mm]
\tikzstyle{level 3}=[sibling distance=5mm]
\node  (z){$(1,1)$}
    child {node  (a) {$(1,3)$}
        child {node  (b) {$(1,5)$}}
        child {node (c) {$(2,4)$}}
        child {node (d) {$(2,2)$}}
        child {node (e) {$(3,1)$}}
    }
    child {node  (f) {$(3,1)$}
        child {node  (g) {$(1,3)$}}
        child {node  (h) {$(2,2)$}}
        child {node  (i) {$(3,2)$}}
        child {node  (j) {$(5,1)$}}
    };

\end{tikzpicture}
\caption{Isomorphic generating tree for Baxter permutations of odd length fixed under $180^{\circ}$ rotation of permutation matrix.}
\end{center}
\end{figure}

\begin{figure}
\begin{tikzpicture}
\tikzstyle{level 1}=[sibling distance=59mm]
\tikzstyle{level 2}=[sibling distance=21mm]
\tikzstyle{level 3}=[sibling distance=5mm]
\node  (z){$(0,0)$}
    child {node  (a) {$(1,2)$}
        child {node  (b) {$(1,4)$}}
        child {node (c) {$(2,2)$}}
        child {node (d) {$(3,1)$}}
    }
    child {node  (f) {$(2,1)$}
        child {node  (g) {$(1,3)$}}
        child {node  (h) {$(2,2)$}}
        child {node  (i) {$(4,1)$}}
    };

\end{tikzpicture}
\caption{Isomorphic generating tree for Baxter permutations of even length fixed under conjugation by the longest element}
\end{figure}
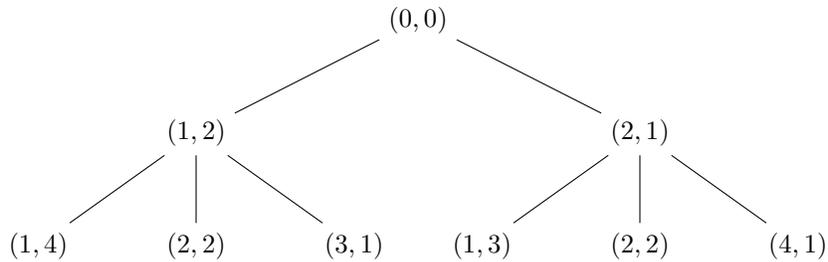

\section{Generating tree for Baxter permutations fixed under $90^{\circ}$ rotation.}
\label{sec:GenTreeQuarter}

Now, we generalize the approach we used for Baxter permutations fixed under $180^{\circ}$ rotation to those fixed under $90^{\circ}$ rotation.

First, we determine for which values of $n$ a Baxter permutation of length $n$ can possibly be fixed under $90^{\circ}$ rotation.

\begin{lemma}
If a permutation of length $n$ is fixed under $90^{\circ}$ rotation, then $n$ must be $4m$ or $4m+1$ for some positive integer $m$.
\end{lemma}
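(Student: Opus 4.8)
The plan is to think about the permutation matrix of $w$ as a set of $n$ cells in an $n \times n$ grid, and to understand the orbit structure of the $90^{\circ}$ rotation acting on that grid. Rotation by $90^{\circ}$ is an order-$4$ operation on the grid, so the cells of the grid are partitioned into orbits of size $4$, except for any cells fixed by the rotation. Since a nontrivial rotation of a square grid fixes at most the center cell, there is a fixed cell if and only if $n$ is odd, and in that case it is the unique central cell $((n+1)/2, (n+1)/2)$. For $w$ to be fixed under the rotation, its matrix (as a set of cells) must be a union of rotation-orbits.

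First I would handle the parity of $n$. If $n$ is even, every orbit has size exactly $4$, so the $n$ cells of the permutation matrix split into $n/4$ orbits of size $4$; hence $4 \mid n$, i.e. $n = 4m$. If $n$ is odd, the orbit structure consists of size-$4$ orbits together with the single central fixed cell. I claim the central cell must actually be occupied: indeed, the center of the grid is fixed by the rotation, and if no cell of an orbit through a given position is occupied the orbit contributes $0$ to the matrix, but the total number of occupied cells is $n$, which is odd; since every size-$4$ orbit contributes either $0$ or $4$ occupied cells (because the matrix is rotation-invariant), the parity of $n$ must come from the central cell, forcing it to be occupied. Removing that one central cell leaves $n-1$ cells forming a union of size-$4$ orbits, so $4 \mid n-1$, i.e. $n = 4m+1$.

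The one point that needs a little care — and which I expect to be the main (though minor) obstacle — is verifying that a size-$4$ rotation-orbit intersects a rotation-invariant permutation matrix in either $0$ or all $4$ of its cells, and in particular that $4$ distinct cells of such an orbit can simultaneously lie in a genuine permutation matrix (no two in the same row or column). For the first part: if one cell of an orbit lies in the matrix, then applying the rotation, which fixes the matrix setwise, shows all images of that cell lie in it too, and these are $4$ distinct cells since the orbit has size $4$. For the consistency part one checks that the four images of a generic off-center cell $(a,b)$ under the rotation occupy four distinct rows and four distinct columns, which is a direct computation with the coordinate formula for $90^{\circ}$ rotation (and the off-center hypothesis guarantees the orbit genuinely has size $4$ rather than collapsing). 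Finally, the statement asks for $m$ a positive integer; I would note the trivial case $n=1$ ($m=0$) is excluded by the hypothesis that $m$ be positive, or simply remark that $n=1$ is the degenerate $m=0$ case, consistent with the single fixed permutation of length $1$.

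Putting these together: an $n$ admitting a $90^{\circ}$-rotation-fixed permutation must have its $n$ occupied cells decompose as a union of size-$4$ orbits plus possibly the central fixed cell, whence $n \equiv 0$ or $1 \pmod 4$, which is exactly the claim $n = 4m$ or $n = 4m+1$.
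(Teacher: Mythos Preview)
Your argument is correct and is essentially the same as the paper's: both proofs observe that the $90^{\circ}$ rotation groups the entries of the permutation into orbits of size $4$ together with at most one central fixed point, forcing $n\equiv 0$ or $1\pmod 4$. The only cosmetic difference is that the paper phrases this via the cycle structure of $w$ (the $4$-cycle $(i,j,n+1-i,n+1-j)$), whereas you phrase it via orbits of cells in the permutation matrix under the rotation; your extra remarks on row/column distinctness and the $m=0$ edge case are harmless additions.
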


\begin{proof}
For a permutation of length $n$ to be fixed under $90^{\circ}$ rotation, it is equivalent to say that if $w_i=j$, then $w_j=n+1-i$, $w_{n+1-i}=n+1-j$, and $w_{n+1-j}=i$. If we consider the cycle structure of this permutation, in general it makes a 4-cycle $(i, j, n+1-i, n+1-j)$. If this were to degenerate into a smaller cycle, we would have that $i=n+1-i$. This forces to $n=2i+1$ to be odd, and it also forces $i=j$, which means it actually degenerates to a single central fixed point. Thus, a permutation fixed under this action must have length $4m$ or $4m+1$, either consisting solely of 4-cycles, or 4-cycles and a single central fixed point.
\end{proof}

\begin{lemma}
If $w$ is a Baxter permutation of length $n$ fixed under $90^{\circ}$ rotation, then $n$ must be odd.
\end{lemma}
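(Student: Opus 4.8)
The plan is to rule out $n = 4m$ by combining the previous lemma (so $n$ is either $4m$ or $4m+1$) with a parity or sign obstruction coming from the fact that a $90^\circ$-fixed permutation of length $4m$ consists entirely of $4$-cycles. First I would translate the rotational symmetry into the concrete statement already recorded: if $w$ is fixed under $90^\circ$ rotation and $w_i = j$, then the four positions $i, j, n+1-i, n+1-j$ are cyclically permuted. When $n = 4m$ there is no central fixed point, so $w$ is a product of $m$ disjoint $4$-cycles; hence $\mathrm{sgn}(w) = (-1)^{3m} = (-1)^m$, and in particular $w$ has an odd number of inversions precisely when $m$ is odd. I do not think the sign alone is enough, so the real input must be a Baxter-specific constraint.

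The key step is to exploit Corollary~\ref{desisinvdes}: a Baxter permutation has the same number of descents as inverse descents. For a $90^\circ$-symmetric permutation I would show that the descent statistic and the inverse-descent statistic are forced to have \emph{opposite} parity when $n = 4m$ (or, alternatively, that some other symmetry-plus-Baxter constraint is violated), producing a contradiction. Concretely: rotating the permutation matrix by $90^\circ$ sends descents of $w$ to inverse descents of $w$ in a way I would make precise by tracking which of the pairs $(k, k+1)$ of adjacent \emph{positions} are descents versus which pairs of adjacent \emph{values} are inverse descents; the $90^\circ$ symmetry pairs these up, and I would check that the central position/value behaves asymmetrically exactly when there is no central fixed point, i.e. when $n$ is even. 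Working out this pairing carefully should show $\mathrm{des}(w) \equiv \mathrm{ides}(w) + 1 \pmod 2$ for $n = 4m$, contradicting Corollary~\ref{desisinvdes}; for $n = 4m+1$ the central fixed point restores the balance and no contradiction arises.

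An alternative, and perhaps cleaner, route avoids descents entirely: look at the generating-tree construction of Section~\ref{sec:GenTreeHalf} restricted to $90^\circ$-fixed permutations. A $90^\circ$-fixed permutation is in particular $180^\circ$-fixed, so by the earlier analysis it is built by simultaneously inserting a new largest and a new smallest label; imposing the extra symmetry (inverse, i.e. reflection across the diagonal) should force insertions to come in a pattern incompatible with the middle-of-the-word boundary case analyzed in the proof of the $180^\circ$ theorem — that boundary case is exactly what occurs for even length and it was shown there that only one of the two symmetric choices is Baxter, breaking the $90^\circ$ symmetry. I would phrase this as: for $n = 4m$ the unique $180^\circ$-symmetric way to add the two central labels is never diagonally symmetric, hence never $90^\circ$-symmetric.

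The main obstacle I anticipate is making the descent/inverse-descent parity bookkeeping in the second paragraph airtight: one has to be careful about boundary contributions (descents "at the ends" of the word, and the position/value straddling the center) and about the off-by-one shifts introduced when $n$ is even versus odd. If that parity argument proves fiddly, I would fall back on the sign argument combined with the structural insertion argument of the third paragraph, which is more robust but requires re-invoking the $180^\circ$ boundary-case lemma in the right way.
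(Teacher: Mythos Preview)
Your second approach---using Corollary~\ref{desisinvdes} to derive a descent obstruction---is the right one, and it is exactly what the paper does, but the paper's execution is much simpler than the parity bookkeeping you describe. You do not need to pair individual descents with inverse descents or worry about boundary contributions at all. The key observation is that $90^\circ$ rotation of the permutation matrix is the map $w \mapsto w_0 w^{-1}$, and that left-multiplying by $w_0$ (reversing labels) complements the descent set: $\mathrm{des}(w_0 u) = (n-1) - \mathrm{des}(u)$ for any permutation $u$. So if $w$ is Baxter with $k$ descents, Corollary~\ref{desisinvdes} gives $\mathrm{des}(w^{-1}) = k$, hence $\mathrm{des}(w_0 w^{-1}) = n-1-k$; if $w$ is fixed under $90^\circ$ rotation then $w = w_0 w^{-1}$, so $k = n-1-k$ and $n = 2k+1$ is odd. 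That is the whole proof---an equality, not a parity, and no case analysis at the center.

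Your proposed parity version would in fact go through (the identity above yields $\mathrm{des}(w) + \mathrm{ides}(w) = n-1$ for \emph{any} $90^\circ$-fixed permutation, so for even $n$ these have opposite parity, contradicting the corollary), but you are reaching for it via an ad hoc bijective pairing rather than simply writing down what rotation does to the descent count globally. The sign argument in your first paragraph and the generating-tree boundary-case argument in your third are both unnecessary detours; drop them.
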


\begin{proof}
If $w$ is a Baxter permutation of length $n$ with $k$ descents, then by Corollary~\ref{desisinvdes}, $w^{-1}$ will have $k$ descents, and $w_0w^{-1}$ will have $n-1-k$ descents. So for a Baxter permutation to be fixed by this action, we must have $k=n-1-k$, which implies that $n$ must be odd. 
\end{proof}

\begin{corollary}
\label{Length4PlusOne}
If $w$ is a Baxter permutation of length $n$, then $n=4m+1$ for some integer $m$.

In particular, a Baxter permutation fixed under $90^{\circ}$ rotation will consist of a single central fixed point, and 4-cycles of the form $(i, j, n+1-i, n+1-j)$.
\end{corollary}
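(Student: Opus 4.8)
The plan is simply to intersect the conclusions of the two preceding lemmas. The first lemma tells us that any permutation fixed under $90^{\circ}$ rotation has length $n=4m$ or $n=4m+1$; the second tells us that a \emph{Baxter} permutation fixed under this action must have odd length. Since $4m$ is even and $4m+1$ is odd, the only possibility compatible with both constraints is $n=4m+1$. (One should pause to note the degenerate case $n=1$, corresponding to $m=0$: the one-letter permutation is vacuously Baxter and vacuously fixed, so one either allows $m=0$ or simply records that $n\ge 5$ in the non-trivial range; this is a cosmetic point, not a real step.)

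For the ``in particular'' clause, I would reread the cycle-structure analysis carried out in the proof of the first lemma. There it was shown that the action $w_i=j\mapsto w_j=n+1-i$, etc., groups the positions into orbits $(i,\,j,\,n+1-i,\,n+1-j)$ of size $4$, with the only possible degeneration occurring when $i=n+1-i$, forcing $n$ odd and $i=j=(n+1)/2$, which collapses the orbit to the single central fixed point $w_{(n+1)/2}=(n+1)/2$. Having just established $n=4m+1$, we are in exactly that odd case: the position $(n+1)/2$ is a fixed point, and the remaining $4m$ positions partition into $m$ genuine $4$-cycles of the stated form. This gives the asserted description with nothing further to check.

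I do not expect any genuine obstacle here: the corollary is a bookkeeping consequence of the two lemmas, and the ``in particular'' statement is a transcription of an observation already made inside the first lemma's proof. The only thing worth being careful about is making the parity argument explicit (that ``$4m$ or $4m+1$'' together with ``odd'' really does force $4m+1$, rather than, say, leaving open some $n\equiv 1\pmod 2$ outside the list), and making sure the central-fixed-point discussion is quoted in a way consistent with the normalization $n=4m+1$ rather than the more general $n\in\{4m,4m+1\}$ used earlier.
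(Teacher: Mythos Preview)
Your proposal is correct and matches the paper's approach exactly: the corollary is stated without proof there, as it is simply the intersection of the two preceding lemmas (length $4m$ or $4m+1$, together with odd length, forces $4m+1$), with the cycle-structure description already established inside the first lemma's proof. There is nothing to add.
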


For $n>1$, such a permutation will have a 4-cycle of the form $(1, j, n, n+1-j)$, which means the permutation starts with $j$, has $n$ in the $j^{th}$ position, 1 in the $(n+1-j)^{th}$ position, and $n+1-j$ at the end. We already know that we can remove $n$ and $1$ from a Baxter permutation and still be a Baxter permutation. It is not hard to see that we can also remove the first element or the last element from a Baxter permutation and still be a Baxter permutation (after reducing labels so we're still a permutation on $[n]$). So if we take a Baxter permutation fixed under $90^{\circ}$ rotation and the remove the largest label, the smallest label, the first label, and the last label, then we will still have a Baxter permutation, and it will still be fixed under $90^{\circ}$ rotation.

Thus, we can create a generating tree, with the identity permutation on 1 element as the root.

%

\begin{figure}
\begin{center}
\begin{tikzpicture}

\tikzstyle{level 1}=[sibling distance=30mm]
\tikzstyle{level 2}=[sibling distance=6mm]
\node  (z){1}
[grow=right, level distance=35mm]
    child { node  (a) {41352}
        child {node (b) {296357418}}
        child {node (c) {672159834}}
        child {node (d) {761258943}}
        child {node (e) {816357492}}
    }
    child { node  (f) {25314}
        child {node  (g) {294753618}}
        child {node  (h) {349852167}}
        child {node  (i) {438951276}}
        child {node  (j) {814753692}}
    };

\end{tikzpicture}
\end{center}
\caption{Start of generating tree for Baxter permutations fixed under $90^{\circ}$ rotation (drawn left-to-right).}
\end{figure}
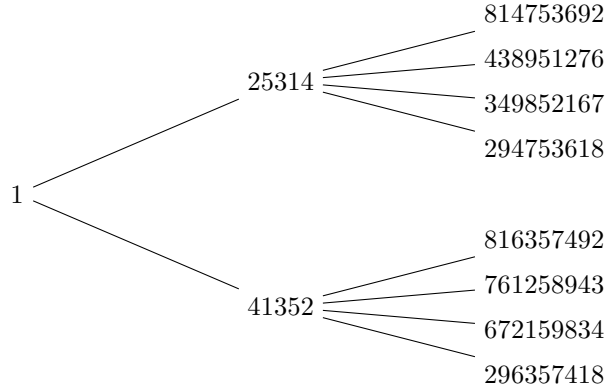

In order to create a 4-cycle, we have to come up with a combinatorial rule for when we can insert a letter at the beginning (resp. end) of a Baxter permutation, and still have it be a Baxter permutation. To insert a letter $j$ at the beginning of a permutation $w$ of length $n$, we mean that we increase all the labels greater than or equal to $j$ in $w$ by 1, and then prepend $j$, so the result is a standard permutation on $[n+1]$.

\begin{lemma}
\label{endinsertion}
Inserting $j$ at the end of a word is equivalent to rotating the permutation matrix $90^{\circ}$ clockwise, inserting $n$ into position $n+1-j$, and the rotating the permutation matrix $90^{\circ}$ counter-clockwise.

Similarly, inserting $j$ at the beginning of a word is equivalent to rotating a permutation matrix $90^{\circ}$ counter-clockwise, inserting $n$ into position $j$, and then rotating back $90^{\circ}$ clockwise.

Consequently, we can insert $j$ at the end (resp. beginning) of a Baxter permutation and still have it be a Baxter permutation if and only if all entries smaller than $j$ appear to the left (resp. right) of $j$, or if all entries bigger than $j-1$ appear to the right (resp. left) of $j-1$.
\end{lemma}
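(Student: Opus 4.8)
The plan is to prove the two intertwining identities by a direct computation on permutation matrices, and then to obtain the ``consequently'' clause by carrying the Chung--Graham--Hoggatt--Kleiman insertion rule for the largest label (recalled in Section~\ref{sec:GenTreeBax}) through the $90^{\circ}$ rotation.

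First I would fix one convention for the permutation matrix of a word and one sense of $90^{\circ}$ rotation, and keep track of the single new $1$-cell created by the insertion. Inserting a letter $j$ at the end of $w$ adjoins to the matrix of $w$ a new last row together with a new column (inserted so as to become the $j$-th column), with a $1$ at their crossing, while the old cells of $w$'s matrix are carried over without change; applying the $90^{\circ}$ rotation then carries that new cell to the cell recording the new largest letter, in the position obtained by rotating the index $j$, which a one-line computation identifies as $n+1-j$. This is exactly the claim that ``insert $j$ at the end of $w$'' and ``insert the new largest letter at position $n+1-j$ into the $90^{\circ}$ rotation of $w$'' are conjugate by that rotation, and the statement for insertion at the beginning is the mirror image (using the opposite sense of rotation, or, alternatively, following from the end case by conjugating with word reversal, under which Baxter permutations are stable). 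This part uses nothing about Baxter permutations; it is a purely combinatorial identity.

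For the ``consequently'' clause, I would combine the above with the fact, noted in Section~\ref{intro}, that $90^{\circ}$ rotation is a bijection from Baxter permutations to Baxter permutations (it lies in the dihedral group of the square, under the whole of which Baxter permutations are closed). Hence ``$w$ stays Baxter after inserting $j$ at the end'' is equivalent to ``the rotation $R(w)$ stays Baxter after inserting its new largest letter at position $n+1-j$'', and I would apply the Chung--Graham--Hoggatt--Kleiman criterion to $R(w)$: the admissible positions are precisely those immediately to the left of a left-to-right maximum, or immediately to the right of a right-to-left maximum, of $R(w)$. Because a $90^{\circ}$ rotation cyclically permutes the four notions of left-to-right/right-to-left maximum/minimum (each is the condition that a prescribed open quarter-plane through a cell of the matrix be empty, and the rotation permutes the four quarter-planes), each of these two conditions on $R(w)$ rewrites as a condition on $w$; unwinding ``position $n+1-j$ of $R(w)$ is adjacent, on the appropriate side, to such an extreme cell'' turns the first alternative into ``every entry of $w$ smaller than $j$ lies to the left of $j$'' and the second into ``every entry of $w$ larger than $j-1$ lies to the right of $j-1$'' (with left and right interchanged for insertion at the beginning). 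Finally I would dispose of the boundary values $j=1$ and $j=n+1$ separately: there the relevant alternative holds vacuously, and one checks directly that the insertion is always Baxter-preserving because no forbidden pattern can place its extreme-valued entry at the very end (resp. beginning) of the word.

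The conceptual input here is minimal, so the real work is bookkeeping: committing to one matrix convention and one orientation for each rotation, propagating the off-by-one in ``position $n+1-j$'' correctly through the relabeling that accompanies the insertion, and --- the delicate step --- converting the two ``adjacent-to-an-extreme-cell'' conditions on $R(w)$ into the clean symmetric criterion on $w$ in the statement. An orientation or indexing slip at any point propagates, so I would double-check the translation against a handful of small Baxter permutations before writing it up.
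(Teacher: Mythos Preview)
Your approach is correct and is essentially the same idea as the paper's: transfer the Chung--Graham--Hoggatt--Kleiman rule for inserting a largest label through a dihedral symmetry of the permutation matrix, then read off what the resulting condition says about $w$. The only real difference is which symmetry does the work. You follow the statement literally and conjugate by the $90^{\circ}$ rotation, then use your nice observation that rotation cyclically permutes the four ``empty quarter-plane'' conditions (left-to-right/right-to-left maxima/minima). The paper instead conjugates by the inverse (matrix transpose): it notes that appending $j$ to $w$ is the same as inserting a new largest label into position $j$ of $w^{-1}$ and inverting back, applies the CGHK criterion to $w^{-1}$, and then translates ``$w^{-1}_j$ is a left-to-right maximum'' and ``$w^{-1}_{j-1}$ is a right-to-left maximum'' directly into the stated conditions on where the values smaller than $j$ (resp.\ larger than $j-1$) sit in $w$; the beginning case is obtained by word reversal. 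Your explicit handling of the boundary values $j=1$ and $j=n+1$ and your geometric phrasing are useful additions the paper leaves implicit, but otherwise the two arguments are the same in substance.
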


\begin{proof}
Before, we thought of inserting 1 as reversing the labels of a permutation, inserting $n$, and reversing the word again. In this case, we can see that inserting $j$ at the end of a word is the same as taking the inverse of a permutation, inserting $n$ into the $j^{th}$ position, and then taking the inverse of the resulting permutation.

So we can append $j$ to a Baxter permutation and still be a Baxter permutation if and only if we can insert $n$ into position $j$ in $w^{-1}$. We can do this if and only if position $j$ is immediately to the left of a left-to-right maximum, or immediately to the right of a right-to-left maximum, which means that either $w^{-1}_j>w^{-1}_i$ for all $i<j$, or $w^{-1}_{j-1}>w_i$ for all $i>j-1$.

Again by symmetry, we can insert $j$ at the beginning of a Baxter permutation $w$ if and only if we can insert $j$ at the end of $w$ reversed, which leads to a similar combinatorial description with left and right interchanged.
\end{proof}

Note that inserting $j$ at the end (resp. beginning) of Baxter permutation can possibly decrease the number of right-to-left (resp. left-to-right) maxima, as any previous left-to-right (resp. right-to-left) maxima that was less than $j$ will no longer be one after $j$ is inserted at the end (resp. beginning).

\begin{theorem}
\label{QuadInsertion} 
For a Baxter permutation fixed under $90^{\circ}$ rotation, for every admissible position we can insert a new largest label and still have a Baxter permutation, it is also possible to insert a new smallest label, a new beginning label, and a new final label so that the result is a Baxter permutation fixed under $90^{\circ}$ rotation.
\end{theorem}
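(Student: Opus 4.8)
The plan is to reduce everything to the single insertion rule for a new largest label. Write $\rho$ for the operation on permutations induced by a $90^\circ$ rotation of the permutation matrix; since $w$ is fixed by $\rho$, it is fixed by every power of $\rho$, in particular by $\rho^{-1}$ and by the $180^\circ$ rotation $\rho^{2}$. The three lemmas we need already re-express the other insertions in terms of inserting a new largest label after a rotation: by the Chung--Graham--Hoggatt--Kleiman rule a new largest label goes immediately to the left or right of a left-to-right maximum; by Lemma~\ref{smallinsertion} a new smallest label at position $q$ is a $\rho^{2}$-conjugate of inserting a new largest label at position $n+1-q$; and by Lemma~\ref{endinsertion} appending (resp.\ prepending) a value is a $\rho$- (resp.\ $\rho^{-1}$-) conjugate of inserting a new largest label. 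Feeding $\rho(w)=\rho^{-1}(w)=\rho^{2}(w)=w$ into these identities, one sees immediately that \emph{if} inserting a new largest label at position $p$ of $w$ is admissible, \emph{then} appending the value $n+1-p$, inserting a new smallest label at position $n+1-p$, and prepending the value $p$ are each admissible in $w$ as well. These four sites are exactly the four entries of one $\rho$-orbit in the length-$(n+4)$ permutation, so whatever permutation we end up with is automatically fixed by $\rho$; the content of the theorem is that the four insertions can be performed \emph{together}.

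For this I would insert the four new points one at a time, in the cyclic order dictated by $\rho$ --- first the new largest label, then the new last label, then the new smallest label, then the new first label --- and check that at each stage the next point is being inserted into an admissible position of the Baxter permutation produced so far, so that Lemma~\ref{remove} and its three analogues guarantee the intermediate objects are Baxter. The mechanism that should make this work is that each successive insertion creates a point extremal in a \emph{new} direction (top, then right, then bottom, then left), while the admissibility criterion for the next insertion is phrased in terms of left-to-right / right-to-left maxima or minima that are unaffected by a point extremal in a transverse direction; and on the occasions where the criterion does see the relevant direction, the ``either/or'' form of the conditions in Lemmas~\ref{smallinsertion} and~\ref{endinsertion} still leaves a legal choice. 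Along the way one has to track the relabelling carefully, since the value that is (say) appended to $w$ changes once the new largest label has already been inserted.

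The main obstacle, I expect, is the degenerate configurations, which play the role of the boundary case in the $180^\circ$ proof (where only one of the two symmetric placements of the label $1$ produced a Baxter permutation). When $p$ is at or near an end of $w$, or near its centre --- roughly $p\in\{1,\,2,\,\tfrac{n+1}{2},\,n,\,n+1\}$ --- two of the four prescribed insertion sites collide, or pinch against the central fixed point, and the naive prescription has to be replaced by a direct check: of the finitely many ways to place the colliding points consistently with $\rho$-symmetry, exactly one avoids both vincular patterns $3\text{-}14\text{-}2$ and $2\text{-}41\text{-}3$, and that is the one to take. Isolating these cases and doing the pattern-avoidance bookkeeping for the small neighbourhood around the new $\rho$-orbit (together with the old entries adjacent to the insertion sites) is where the real work lies; once they are dispatched, the generic case is just conjugation through the symmetry of $w$.
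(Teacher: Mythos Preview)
Your overall framework --- use the rotational symmetry of $w$ to identify the four companion insertions, then verify they can be carried out sequentially --- is exactly the paper's. But two points in your outline are off. First, your heuristic that ``each successive insertion creates a point extremal in a new direction \ldots\ unaffected by a point extremal in a transverse direction'' is not true as stated: appending a value at the end \emph{can} kill the right-to-left maximum you need for the largest-label insertion, and the paper has to invoke the inequality $w_j\ge n-j$ (which holds precisely because $w_j$ is a right-to-left maximum with $n-j$ entries to its right) to rule this out. That inequality is the missing idea in your sketch; without it the ``generic'' case does not go through by symmetry alone. Second, the case split in the paper is not a handful of boundary positions as you anticipate, but the dichotomy $j+1<n/2$ versus $j+1>n/2$ for \emph{every} admissible $j$: the value one appends is $n+1-j$ in one case and $n-j$ in the other, because the relabelling caused by prepending $j+1$ shifts the target value or not according to which side of $j+1$ it sits. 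So the ``real work'' is not confined to degenerate configurations; it is the whole proof.

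One further difference worth noting: the paper's insertion order is prepend, append, then largest, then smallest --- not the cyclic order you propose. This is deliberate: after the first two insertions the intermediate permutation of length $n+2$ is fixed under $180^\circ$ rotation, so the argument of Section~\ref{sec:GenTreeHalf} immediately gives the complementary smallest-label insertion once the largest-label insertion is verified. Your order (largest, last, smallest, first) never passes through a $180^\circ$-symmetric intermediate, so you would have to redo that verification by hand.
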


\begin{proof}
Without loss of generality, assume we are inserting $n$ to the right of a right-to-left maxima. The procedure for when we can insert $n+1$ to the left of a left-to-right maxima is the same, except we reverse the order of the word, follow the procedure for inserting $n+1$ to the right of a right-to-left maxima, and then reverse the order of the resulting word.

Say $w$ is a Baxter permutation of length $n$ fixed under $90^{\circ}$ rotation, with a right-to-left maxima at $w_j$. This means that we could insert $n+1$ into position $j+1$, and by Lemma~\ref{smallinsertion} we could also insert 1 into position $n-j$, and by Lemma~\ref{endinsertion} we could insert $n-j$ at the end or $j+1$ at the beginning. Specifically, since we know that we'd be inserting $n+1$ to the right of a left-to-right maxima, we know that $w_{n+j-1}$ must be a right-to-left minima, and that all entries larger than $j$ appear to the left of $j$, and that all entries smaller than $n-j$ appear to the left of $n-j$. We also know that as a left-to-right maxima, $w_j$ must be at least $n-j$, since there are $n-j$ things to its right that must be smaller.

But we need to check that we can perform all four insertions sequentially in a way so that the result is a Baxter permutation fixed under $90^{\circ}$ rotation with a new 4-cycle added.

We will seperately consider the cases with $j+1<n/2$, and $j+1>n/2$. Note that since $n$ has to be odd, we don't have to deal with the special case of $j+1=n/2$.

First, suppose $j+1<n/2$. We insert $j+1$ at the beginning first, which increases any labels that were $j+1$ or higher by 1. So now, we want to insert $n+1-j$ at the end. We have to check that all labels less than $n+1-j$ are to the left of $n+1-j$. Since in the original permutation we had that all labels less than $n-j$ were to the left of $n-j$, when we add 1 to all labels $j+1$ or higher, we will have that all labels less than $n+1-j$ (except possibly $j+1$) are to the left of $n+1-j$. But since we add $j+1$ to the beginning of the word, it will also certainly be to the left of $n+1-j$. So we may insert $(n+1-j)$ at the end.

We now have a permutation of length $n+2$, which will still be fixed under $180^{\circ}$ rotation. So by the previous section, if we can insert a new largest label into some position, we know we can insert a new smallest label into the complementary position. The $(j+1)^{st}$ entry in this permutation will be $w_j+2$, as inserting two smaller labels increased its label by 2, and inserting a label at the beginning shifted it right by one. We need to check that this is still a right-to-left maxima. The only thing we did that could have changed this is inserting $n+1-j$ at the end. However, since $w_j\geq n-j$, we have $w_j+2\geq n+2-j$, so adding $n+1-j$ will not keep it from being a right-to-left-maxima. Thus, we can insert a new largest label into position $j+2$, and also a new smallest label into the complementary position $(n+1-j)$.

After all of these steps, we will now have $j+2$ in the first position, $n+4$ in position $j+2$, 1 in position $(n+4)-(j+2)$, and $(n+2-j)$ at the end, which creates the desired 4-cycle.

Now, suppose $j+1>n/2$. Again, we insert $j+1$ at the beginning. Now, we want to insert $n-j$ at the end. Inserting $j+1$ will not affect any labels $n-j$ or smaller, so we will still have that all labels less than $n-j$ are to the left of $n-j$.

Again, we have a permutation of length $n+2$ fixed under $180^{\circ}$ rotation, so it suffices to show we can place a new largest label, and it will automatically follow that a new smallest label can go in the complementary position. Consider the $(j+1)^{st}$ entry of this permutation, which was originally $w_j$. We claim this is still a right-to-left maxima. The only thing that could have changed this fact is inserting $n-j$ at the end. Since $w_j\geq n-j$, this label would at least be increased by 1 when we inserted $n-j$. It could possibly also be increased by 1 when we inserted $j+1$, but what's important is that the $(j+1)^{st}$ entry is at least $n+1-j$, and thus having $n-j$ at the end will not prevent it from being a left-to-right maxima.

After all of these steps, we will now have $j+3$ in the first position, $n+4$ in position $j+3$, 1 in position $(n+4)-(j+3)$, and $n+1-j$ at the end, which creates the desired 4-cycle.
\end{proof}

Now, we want analyze how doing these four insertions changes the number of left-to-right and right-to-left maxima.

\begin{lemma}
\label{LeftIsRight}
If $w$ is a Baxter permutation fixed under $90^{\circ}$ rotation, then $w$ has the same number of left-to-right and right-to-left maxima. In particular, if $w$ has left-to-right maxima in positions $x_1<x_2<\ldots <x_j$ and right-to-left maxima at positions $y_j<y_{j-1}<\ldots<y_1$, and we do a 4-cycle insertion corresponding to being able to insert a new largest label to the right of $w_{y_i}$(or to the left of $w_{x_i}$), then the resulting Baxter permutation fixed under $90^{\circ}$ rotation will have $i+1$ left-to-right maxima and $i+1$ right-to-left maxima.
\end{lemma}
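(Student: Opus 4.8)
The plan is to deduce both assertions from a single structural fact: for a Baxter permutation $w$ of length $n$ fixed under $90^{\circ}$ rotation, the set of positions at which $w$ has a right-to-left maximum equals the set of values that $w$ takes at its left-to-right maxima. To see this, recall from the discussion preceding Corollary~\ref{Length4PlusOne} that being fixed under $90^{\circ}$ rotation means that $w_i=j$ forces $w_j=n+1-i$; equivalently $w^2=w_0$, so $w$ commutes with $w_0$ and $w^{-1}(k)=n+1-w(k)$ for every $k$. Now a value $v$ is attained at a left-to-right maximum precisely when every value $u>v$ lies to the right of $v$, i.e.\ $w^{-1}(u)>w^{-1}(v)$ whenever $u>v$; substituting $w^{-1}=n+1-w$ rewrites this as $w_u<w_v$ whenever $u>v$, which is exactly the statement that position $v$ is a right-to-left maximum of $w$. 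Comparing the cardinalities of these two equal sets gives the first assertion; write $j$ for the common count, matching the hypotheses of the ``in particular'' clause. Moreover, since the right-to-left-maximum positions listed increasingly are $y_j<y_{j-1}<\cdots<y_1$, the identified set shows that $y_i$ is the $(j+1-i)$-th smallest value among the left-to-right maxima of $w$, so $w$ has exactly $i-1$ left-to-right maxima of value greater than $y_i$. Finally, all left-to-right maxima of $w$ occur in positions $\le y_i$, since the largest-valued left-to-right maximum is the entry $n$, which sits in position $y_j\le y_i$.

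For the ``in particular'' clause, note first that by Theorem~\ref{QuadInsertion} the permutation $w'$ produced by the $4$-cycle insertion is again a Baxter permutation fixed under $90^{\circ}$ rotation, so by the first assertion it has equally many left-to-right and right-to-left maxima, and it is enough to show this number is $i+1$. I would track the number of left-to-right maxima through the four insertions, in the order used in the proof of Theorem~\ref{QuadInsertion}: prepend the new first entry, of value $y_i+1$; append the new last entry; insert the new largest entry immediately to the right of the (shifted) copy of $w_{y_i}$; and insert the new smallest entry. Prepending $y_i+1$ destroys every left-to-right maximum of $w$ of value $\le y_i$, leaves the $i-1$ of greater value, and makes the new first entry one as well, for a running total of $i$. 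Appending the new last entry changes nothing: as noted in the proof of Theorem~\ref{QuadInsertion}, its value is not the current maximum, so it is neither a left-to-right maximum nor does it destroy one. Inserting the new largest entry just to the right of the copy of $w_{y_i}$ destroys no left-to-right maximum, because all $i$ current ones sit in positions at most $y_i+1$ (by the ``positions $\le y_i$'' fact together with the shift from prepending), while the new entry enters at position $y_i+2$; and it is itself a left-to-right maximum, so the total becomes $i+1$. Inserting the new smallest entry puts it in an interior position, where the value $1$ is never a left-to-right maximum and destroys none; hence $w'$ has $i+1$ left-to-right maxima, and therefore $i+1$ right-to-left maxima.

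The main obstacle is the bookkeeping in the second paragraph: one must keep track of the relabelings and one-step position shifts caused by each insertion, and confirm against the proof of Theorem~\ref{QuadInsertion} that the new last entry and the new smallest entry genuinely land in non-extreme positions. The only real edge case is $i=1$, where $y_i=n$ and the prepended value $y_i+1$ is the new global maximum: there the running count collapses to $1=i$ after prepending and then climbs to $2=i+1$, so the conclusion still holds. What makes the count close up is the structural fact of the first paragraph: the matching of right-to-left-maximum positions with left-to-right-maximum values is exactly what forces precisely $i-1$ left-to-right maxima of $w$ to survive the prepend, and hence the final count to be $i+1$ rather than a quantity independent of $i$.
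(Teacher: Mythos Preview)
Your proof is correct and rests on the same key observation as the paper's: for a $90^{\circ}$-fixed Baxter permutation, the set of right-to-left maximum \emph{positions} coincides with the set of left-to-right maximum \emph{values}. The paper phrases this via $w=w_0w^{-1}$ and then counts \emph{right-to-left} maxima after the insertion, using the structural fact to set up bijections between RTL maxima killed by the new largest entry and LTR maxima killed by the prepended entry (and likewise between those killed by the appended entry and by the inserted $1$); you instead count \emph{left-to-right} maxima directly, using the structural fact to pin down that exactly $i-1$ of them have value exceeding $y_i$ and that all of them sit at positions $\le y_i$. These are two organizations of the same idea; your step-by-step bookkeeping is a bit more explicit, while the paper's bijective phrasing is more compressed, but neither introduces a genuinely different ingredient.
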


\begin{proof}
We note that $w_j<w_i$ if and only if $j$ appears to the left of $i$ in $w^{-1}$ if and only if $i$ appears to the left of $j$ in $w_0w^{-1}$. Thus, if $w=w_0w^{-1}$, we have a right-to-left maxima in position $j$ if and only if $j$ is a left-to-right maxima (and similarly, a left-to-right maxima in position $j$ if and only $j$ is a right-to-left maxima). So if $w$ is fixed by this action, it must have the same number of right-to-left maxima as left-to-right maxima.

Additionally, this gives a bijection between right-to-left maxima that were originally in $w$ that are later killed by $n+3$, and left-to-right maxima that are killed by the $j+2$ or $j+3$ at the beginning of the word. Similarly, there is a bijection between right-to-left maxima originally in $w$ that are later killed by the final entry, and left-to-right maxima originally in $w$ later killed by 1.

Since 1 always ends up on the interior of the word, it will never be a left-to-right maxima, and so the final entry will also never kill anything that was originally a right-to-left maxima. Since we (WLOG) did an insertion corresponding to putting a new largest label to the right of $w_{y_i}$, $n+3$ will kill the right-to-left maxima $w_{y_{i}},\ldots w_{y_{j}}$. Thus, we will have $i+1$ right-to-left maxima; the new right-most entry, the $i-1$ original right-to-left maxima not killed by $n+3$, and $n+3$.
\end{proof}

We now have enough information to analyze the generating tree for Baxter permutations fixed under rotation by $90^{\circ}$. If a Baxter permutation fixed under rotation by $90^{\circ}$ has $i+1$ left-to-right maxima and $i+1$ right-to-left maxima, then it will have $2i+2$ children. There will be $i+1$ children with number of left-to-right (and right-to-left) maxima being $2,3,\ldots i+2$ corresponding to inserting a new largest label to the left of a left-to-right maxima, and $i+1$ children with number of left-to-right (and right-to-left) maxima being $2,3,\ldots i+2$ corresponding to inserting a new largest label to the right of a right-to-left maxima.

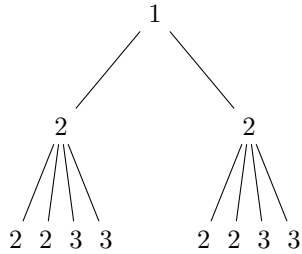
\begin{figure}
\begin{center}
\begin{tikzpicture}
\tikzstyle{level 1}=[sibling distance=25mm]
\tikzstyle{level 2}=[sibling distance=4mm]
\node  (z){1}
    child {node  (a) {2}	
		child {node (a1) {2}}
		child {node (a2) {2}}
		child {node (a3) {3}}
		child {node (a4) {3}}
		}
		child { node (b) {2}
		child {node (b1) {2}}
		child {node (b2) {2}}
		child {node (b3) {3}}
		child {node (b4) {3}}
		};
	\end{tikzpicture}
\end{center}
\caption{The beginning of the doubled Catalan tree}
\label{doublecattree}
\end{figure}

We may now prove the main result.

\begin{MainResult}
The number of Baxter permutations of length $n$ fixed under $90^{\circ}$ rotation of its permutation matrix is $2^mC_m$ (where $C_m$ is the Catalan number) if $n=4m+1$, and zero otherwise.
\end{MainResult}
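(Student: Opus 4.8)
\section*{Proof plan}

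The plan is to recognize the generating tree for $90^\circ$-fixed Baxter permutations as the explicitly-described abstract tree of Figure~\ref{doublecattree} --- the ``doubled Catalan tree'' --- and then count its rank sizes.

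First I would assemble the branching structure. By Corollary~\ref{Length4PlusOne} every $90^\circ$-fixed Baxter permutation has length $4m+1$, so the count is automatically $0$ unless $n\equiv 1\pmod 4$, and it remains to treat $n=4m+1$. By Theorem~\ref{QuadInsertion} the generating tree with root the identity on one letter and parent map ``delete the largest label, the smallest label, the first entry, and the last entry'' is well defined, and every node of length $4m+1$ has a unique parent of length $4m-3$; hence the $90^\circ$-fixed Baxter permutations of length $4m+1$ are exactly the nodes at depth $m$. By Lemma~\ref{LeftIsRight} every node on this tree has equally many left-to-right and right-to-left maxima; writing $k$ for this common number, the same lemma together with Theorem~\ref{QuadInsertion} says that a node with statistic $k$ has $2k$ children, carrying statistics $2,2,3,3,\dots,k+1,k+1$. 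The point to check carefully is that the entire subtree below a node depends on nothing but $k$: this is the recursive observation that a child's statistic determines \emph{its} children's statistics, and it works precisely because the one-dimensional statistic $k$ is self-consistent on this tree (unlike the two-dimensional statistic $(i,j)$ needed for all Baxter permutations, or for $180^\circ$-fixed ones). Since the root has statistic $1$, the resulting abstract tree is exactly the doubled Catalan tree.

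Next I would count the depth-$m$ nodes of the doubled Catalan tree. Take the ordinary Catalan tree to be the tree with root labelled $1$ in which a node labelled $\ell$ has children labelled $2,3,\dots,\ell+1$; as recalled in Section~\ref{sec:GenTreePerm} (and checked on the first levels $1,1,2,5,14,\dots$) this tree has $C_m$ nodes at depth $m$. The doubled Catalan tree is obtained from it by replacing, at every node, each child by two children with the same label. I would then exhibit a bijection
\[
\{\text{depth-}m\text{ nodes of the doubled tree}\}\ \longleftrightarrow\ \{\text{depth-}m\text{ nodes of the Catalan tree}\}\times\{0,1\}^m ,
\]
sending a node $v$ to the sequence of labels along the root-to-$v$ path (a valid path in the Catalan tree) together with the word recording, at each of the $m$ steps, which of the two equally-labelled children was descended into. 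This map is visibly invertible, so the doubled tree has $2^m C_m$ nodes at depth $m$. (Alternatively, one proves by induction on $m$ that the number $f_m(k)$ of depth-$m$ nodes of the doubled tree with statistic $k$ equals $2^m g_m(k)$, where $g_m(k)$ is the corresponding count for the Catalan tree, using the parallel recursions $f_{m+1}(k)=2\sum_{j\ge k-1}f_m(j)$ and $g_{m+1}(k)=\sum_{j\ge k-1}g_m(j)$, and then sums over $k$.)

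Combining the two steps yields the theorem: for $n=4m+1$ the number of $90^\circ$-fixed Baxter permutations equals the number of depth-$m$ nodes of the doubled Catalan tree, namely $2^m C_m$, and it is $0$ for all other $n$. I do not expect a genuine obstacle: all of the delicate combinatorics has already been absorbed into Theorem~\ref{QuadInsertion} and Lemma~\ref{LeftIsRight}, and the only thing demanding care is the bookkeeping in the first paragraph --- confirming that the assignment ``node $\mapsto$ common number of left-to-right/right-to-left maxima'' turns the generating tree into the doubled Catalan tree on the nose, with the correct root and the correct depth-to-length dictionary $n=4m+1$.
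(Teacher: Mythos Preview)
Your proposal is correct and follows essentially the same route as the paper: invoke Corollary~\ref{Length4PlusOne} to reduce to $n=4m+1$, use Theorem~\ref{QuadInsertion} and Lemma~\ref{LeftIsRight} to identify the generating tree with the doubled Catalan tree of Figure~\ref{doublecattree}, and read off the rank sizes. The only difference is that you spell out the bijection with $\{\text{Catalan-tree depth-}m\text{ nodes}\}\times\{0,1\}^m$ explicitly, whereas the paper simply asserts that the doubling yields $2^mC_m$; this is a welcome bit of extra care but not a different argument.
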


\begin{proof}
By Corollary~\ref{Length4PlusOne}, $n$ must be $4m+1$ for some integer $m$.

Now, we consider the generating tree on Baxter permutations fixed under $90^{\circ}$ rotation. By Lemma~\ref{QuadInsertion}, if a Baxter permutation fixed under rotation by $90^{\circ}$ has $i+1$ left-to-right maxima and $i+1$ right-to-left maxima, then it will have $2i+2$ children. By Lemma~\ref{LeftIsRight}, there will be $i+1$ children with number of left-to-right (and right-to-left) maxima being $2,3,\ldots i+2$ corresponding to inserting a new largest label to the left of a left-to-right maxima, and $i+1$ children with number of left-to-right (and right-to-left) maxima being $2,3,\ldots i+2$ corresponding to inserting a new largest label to the right of a right-to-left maxima. 

Thus, we can identify a Baxter permutation fixed under $90^{\circ}$ rotation with $i+1$ left-to-right maxima (and thus $i+1$ right-to-left maxima) with the number $i+1$, and its descendents will be identified with the numbers $2,2,3,3,\ldots i+2,i+2$.

This generating tree is almost like the Catalan tree, except each parent with label $i+1$ has two (not one) children with a label between $2$ and $i+2$, and our root will have label 1. This implies that the number of elements of a given rank $m$ must be $2^mC_m$. See Figure~\ref{doublecattree}.
\end{proof}

\section{Remarks}

The fact that this enumeration has such an elegant closed formula means that it is likely that there is an underlying combinatorial bijection. However, as with Chung, Graham, Hoggat, and Kleiman, the method of generating trees does not make such an interpretation transparent.

Additionally, one might hope that it is possible to extend the previous ``q=-1'' result for Baxter permutations fixed under $180^{\circ}$ rotation to an instance of the cyclic sieving phenomenon. That is to say, finding a polynomial $f(q)$ where gives an enumeration of Baxter permutations (perhaps with respect to some statistics), $f(-1)$ counts how many of these Baxter permutations are fixed under $180^{\circ}$ rotation, and $f(i)=f(-i)$ counts how many of them are fixed under $90^{\circ}$ rotation. However, the natural candidate used in ~\cite{Dilks} does not give the right enumeration when evaluated at $i$, and it does not appear that it can be easily modified to give such a result.

\bibliographystyle{abbrv}
\bibliography{BaxterRotation}
\label{sec:biblio}

\end{document}